%------------------------------------------
%
% On Card guessing
%
%------------------------------------------

\documentclass[10pt,reqno,a4paper]{amsart}

%\ifx\pdfoutput\undefined
%\usepackage[dvips]{graphicx}
%\else
\usepackage[pdftex]{graphicx}
\pdfcompresslevel=9
%\fi

\usepackage{pgfplots}
\usepackage{tikz}  
\usepackage{caption}

\usetikzlibrary{arrows.meta,decorations.pathmorphing,backgrounds,positioning,fit,petri}

\pgfplotsset{compat=1.18}

\usepackage{amsmath}
\usepackage{amssymb,amsthm,hyperref,caption}%,times
\usepackage{color}
\definecolor{meinBlau}{rgb}{0.2,0.2,0.9} %color for in-document links
\definecolor{blau}{rgb}{0,0,0.75} %color for in-document links
\definecolor{rot}{rgb}{0.74,0,0} %color for in-document links
\hypersetup{colorlinks,linkcolor=blau,citecolor=blue,urlcolor=meinBlau}
\usepackage{times}
\usepackage{enumitem}

\usepackage{xcolor}
\usepackage{tabularray}

%logistic approximation to Phi(x)
\tikzset{
    declare function={
        normcdf(\x)= 1/(1 + exp(-0.07056*(\x)^3 - 1.5976*(\x)));
    }
}

\allowdisplaybreaks

\newtheorem{theorem}{Theorem}
\newtheorem{lem}[theorem]{Lemma}

\newtheorem{prop}{Proposition}

\theoremstyle{definition}

\newtheorem{remark}{Remark}
\newtheorem{example}{Example}

\def\P{{\mathbb {P}}}
\def\E{{\mathbb {E}}}

%-------------TEXT-------------------

%\newcommand{\abstand}{\vspace{1cm}}

%-------------TREES-------------------
%\newcommand{\rec}{\text{bucket rec. trees}}
%\newcommand{\dit}{\ensuremath{(b,d)}\text{-ary ITs}}
%\newcommand{\port}{\ensuremath{(b,\alpha)}\text{-PORTs}}
%\newcommand{\Dit}{\ensuremath{(b,d)}\text{-ary} \text{in\-crea\-sing} \text{trees}}
%\newcommand{\Port}{\ensuremath{(b,\alpha)}\text{-plane} \text{oriented} \text{recursive} \text{trees}}
%\newcommand{\Rec}{\text{bucket recursive trees}}

%fallende Faktorielle
\newcommand{\fallfak}[2]{\ensuremath{#1^{\underline{#2}}}}

%natuerliche Zahlen
\newcommand{\N}{\ensuremath{\mathbb{N}}}
%rationale Zahlen

%komplexe Zahlen

%reelle Zahlen
\newcommand{\R}{\ensuremath{\mathbb{R}}}
%ganze Zahlen

%\renewcommand{\deg}{\text{deg}}

\DeclareMathOperator{\erf}{erf}

\newcommand{\be}{\ensuremath{\boldsymbol{1}}}
\newcommand{\bz}{\ensuremath{\boldsymbol{2}}}
\newcommand{\bd}{\ensuremath{\boldsymbol{3}}}
\newcommand{\bv}{\ensuremath{\boldsymbol{4}}}

\newcommand{\Stir}[2]{\genfrac{ \{ }{ \} }{0pt}{}{#1}{#2}}

\DeclareMathOperator{\law}{\overset{\mathcal{L}}{=}}
\DeclareMathOperator{\claw}{\overset{\mathcal{L}}{\rightarrow}}

\DeclareMathOperator{\LinExp}{LinExp}

\DeclareMathOperator{\Bin}{B}

\newcommand{\cut}{\ensuremath{\mathsf{Cut}}}

\newcommand{\entr}[3]
{
\ensuremath
	{
  \begin{matrix}#1\\#2\\#3\end{matrix}
	}
}

\newcommand{\entry}[4]
{
\ensuremath
	{
    \begin{matrix}#1\\#2\\#3\\#4\end{matrix}
	}
}

%fett m

\begin{document}

\author[M.~Kuba]{Markus Kuba}
\address{Markus Kuba\\
Department Applied Mathematics and Physics\\
University of Applied Sciences - Technikum Wien\\
H\"ochst\"adtplatz 5, 1200 Wien} %
\email{kuba@technikum-wien.at}

\author[A.~Panholzer]{Alois Panholzer}
\address{Alois Panholzer\\
Institut f{\"u}r Diskrete Mathematik und Geometrie\\
Technische Universit\"at Wien\\
Wiedner Hauptstr. 8-10/104\\
1040 Wien, Austria} \email{Alois.Panholzer@tuwien.ac.at}

\title[Limit law for no feedback one-time riffle shuffle]{On Card guessing games: limit law for no feedback one-time riffle shuffle}

\keywords{Card guessing, riffle shuffle, no feedback, limit law, moments}%
\subjclass[2000]{05A15, 05A16, 60F05, 60C05} %

\begin{abstract}
We consider the following card guessing game with no feedback. An ordered deck of
$n$ cards labeled $1$ up to $n$ is riffle-shuffled exactly one time. 
Then, the goal of the game is to maximize the number of correct guesses of the cards. One after another a single card is drawn from the top, the guesser makes a guess without seeing the card and gets no response if the guess was correct or not.
Building upon and improving earlier results, we provide a limit law for the number of correct guesses and also show convergence of the integer moments. 
\end{abstract}

\maketitle

\section{Introduction}
The analysis of card shuffling and card guessing games has a long history. Starting from a mathematical model of shuffling developed
in 1956 at Bell Labs by E.~Gilbert and C.~Shannon, 
the subject has extended in various directions in a great many articles, amongst others~\cite{AldousDiaconis1986,Diaconis1978,DiaconisGraham1981,DiaconisMcPitman1995,HeOttolini2021,KnoPro2001,PK2023,KuPanPro2009,Leva1988,OttoliniSteiner2022,OT2023,Read1962,Zagier1990}. 
The mathematical analysis of questions related to card shuffling and card guessing are
not only of purely theoretical interest. There are applications to the analysis of clinical trials
~\cite{BlackwellHodges1957,Efron1971}, fraud detection related to 
extra-sensory perceptions~\cite{Diaconis1978}, guessing so-called Zener Cards~\cite{OttoliniSteiner2022}, 
as well as relations to tea tasting and the design of statistical experiments~\cite{Fisher1936,OT2023}.

\smallskip

In this work we consider the following problem. A deck of $n$ cards labeled consecutively
from 1 on top to $n$ on bottom is face down on the table. The deck
is riffle shuffled once and placed back on the table, face down. A
guesser tries to guess at the cards one at a time, starting from the top.  
The goal is to maximize the number of correct guesses with the caveat, that the
identities of the card guessed are not revealed, nor is the guesser told
whether a particular guess was correct or not. Such card guessing games 
are usually called \textit{no feedback} games, as no information at all is delivered to the person guessing. 
In contrast, there are \textit{complete feedback} games, where the guesser is shown the drawn card and thus knows,
whether the guess was correct or not. For a similar card guessing game with complete feedback we refer the reader to
\cite{KT2023,Liu2021}. 

\smallskip 

The optimal strategy for the no feedback game, as well as extensions to $k$-time riffle shuffles, has been given by Ciucu~\cite{Ciucu1998}.
Therein, he also derived the expected value $\E(X_n)$ of the number of correct guesses $X_n$, when the deck is riffle shuffled once; see also Krityakierne and Thanatipanonda~\cite{NFNW-KT2022} for related results. The first few higher moments of $X_n$ were derived by Krityakierne et al.~\cite{KSTY2023}. We build on the earlier work~\cite{Ciucu1998,KSTY2023} and derive in this article the limit law of the number of correct guesses $X_n$ in the no feedback game and also give asymptotic results for all integer moments, extending the results of~\cite{KSTY2023}. 

\smallskip 

Finally, we also comment on a different kind of card guessing games under the uniform distribution. 
A deck of a total of $M$ cards is shuffled, and then the guesser is provided with the total number of cards $M$, as well as the individual numbers of say hearts, diamonds, clubs and spades. After each guess, the person guessing the cards is shown 
the drawn card, which is then removed from the deck. This process is continued until no more cards are left. 
Assuming the guesser tries to maximize the number of correct guesses, one is again interested in the total number of correct guesses. 
The card guessing procedure can be generalized to an arbitrary number $N\ge 2$ of different types of cards. 
In the simplest setting there are two colors, red (hearts and diamonds) and black (clubs or spades), 
and their numbers are given by non-negative integers $m_1$, $m_2$, with $M=m_1+m_2$. One is then interested in the random variable $C_{m_1,m_2}$ counting the number of correct guesses. Interestingly, it turned out that the random variable $C_{m_1,m_2}$ is closely 
related to card guessing with complete feedback after a single riffle shuffle. For this complete feedback card guessing game, not only the expected value and the distribution of the number of correct guesses is known~\cite{DiaconisGraham1981,KnoPro2001,Leva1988,Read1962,Zagier1990}, but also multivariate limit laws and interesting relations to combinatorial objects such as Dyck paths and P\'olya-Eggenberger urn models have been established~\cite{DiaconisGraham1981,PK2023,KuPanPro2009}. 

\subsection{Notation}
As a remark concerning notation used throughout this work, we always write $X \law Y$ to express equality in distribution of two random variables (r.v.) $X$ and $Y$, and $X_{n} \claw X$ for the weak convergence (i.e., convergence in distribution) of a sequence of random variables $X_{n}$ to a r.v.\ $X$. Furthermore, throughout this work we let $h:=h(n)=\lceil \frac{n}{2}\rceil$. 
Moreover, we denote for $s\in\N$ with $\fallfak{x}{s}=x(x-1) \cdots (x-(s-1))$ the falling factorials.

\section{Riffle shuffle model and optimal strategy\label{Subsection_GSR}}
\subsection{Gilbert-Shannon-Reeds model}
The riffle shuffle, sometimes also called dovetail shuffle, is a card shuffling technique. In the mathematical modeling of card shuffling, the \emph{Gilbert–Shannon–Reeds} model~\cite{DiaconisGraham1981,Gilbert1955} is a probability distribution serving as a model of a riffle shuffle. 
One considers a sorted deck of $n$ cards labeled consecutively from 1 up to $n$. 
The deck of cards is cut into two packets, assuming that
the probability of selecting $k$ cards in the first packet, which we call a cut at position $k$, and $n-k$ in the second packet is defined as a binomial distribution with parameters $n$ and $1/2$:
\[
\P\{\cut=k\}=
\frac{\binom{n}k}{2^n},\quad 0\le k\le n.
\]
Afterward, the two packets are interleaved back into a single pile: one card at a time 
is moved from the bottom of one of the packets to the top of the shuffled deck, such that if $m_1$ cards remain in the first and $m_2$ cards remain in the second packet, then the probability of choosing a card from the first packet is $m_1/ ( m_1 + m_2 )$ and the probability of choosing a card from the second packet is $m_2 / ( m_1 + m_2 )$. 
\begin{figure}[!htb]
\includegraphics[scale=0.55]{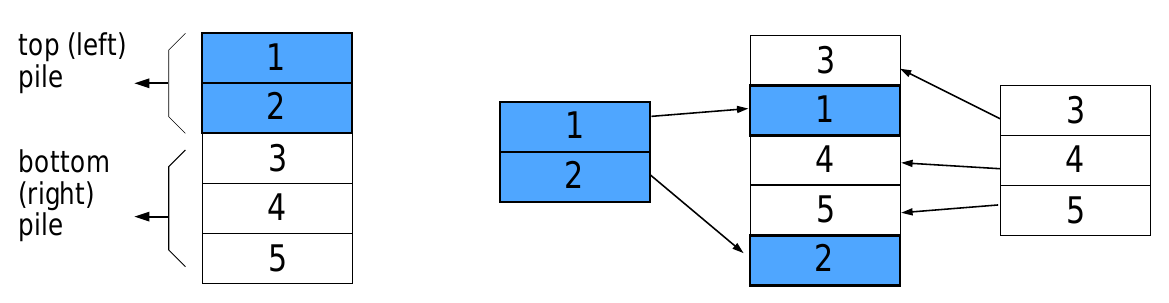}%
\caption{Example of a one-time riffle shuffle: a deck of five cards is split after 2 with probability $\binom52/2^5=5/16$ and then interleaved.}%
%\label{}%
\end{figure}
For a one-time shuffle, the operation of interleaving described above gives rise to an ordered
deck (corresponding to the identity permutation) with multiplicity $n + 1$. 
Each other shuffled deck corresponds to a permutation with exactly two increasing subsequences
and has multiplicity 1. In total number there are $2^n- n - 1$ different permutations of two increasing
subsequences from the interleaving.

\subsection{Optimal strategy}
The optimal strategy $\mathcal{G}^{\ast}$ for maximizing the number $X_n$ of correctly guessed cards, starting with a deck of $n$ ordered cards, after a one-time riffle shuffle stems from the following proposition based on work of Ciucu~\cite{Ciucu1998}
and Krityakierne and Thanatipanonda~\cite{NFNW-KT2022}. 
\begin{prop}[\cite{Ciucu1998,NFNW-KT2022}]
\label{Prop:firstCard}
In order to maximize the number of correct guesses in a one-time riffle shuffle no feedback card guessing game, for large $n$, the guesser should follow the optimal strategy $\mathcal{G}^{\ast}$: guess the top half of the deck
with sequence
\[
1,2,2,3,3,4,4,\dots
\]
and guess the bottom half of the deck with sequence
\[
\dots,n-3,n-3,n-2,n-2,n-1,n-1,n.
\]
\end{prop}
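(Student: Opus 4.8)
The plan is to reduce the game to a position-by-position optimization and then to an extremal problem for binomial coefficients. \emph{First}, I would record the \emph{no-feedback reduction}. Since the guesser is told nothing during play, the information available before guessing the card in position $i$ is exactly the information available at the start, namely the index $i$ and the shuffle model; hence any strategy is a map $i\mapsto g(i)$, and by linearity of expectation the expected number of correct guesses is $\sum_{i=1}^{n}\P\{\text{position }i\text{ holds card }g(i)\}$. As randomization only averages deterministic strategies and there is no constraint forcing the $g(i)$ to be distinct (labels may be repeated, as in the claimed sequences), the optimum is obtained by maximizing each term separately: the optimal guess at position $i$ is any $j$ maximizing $p_{i,j}:=\P\{\text{position }i\text{ holds card }j\}$.

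\emph{Second}, I would compute $p_{i,j}$ from the Gilbert--Shannon--Reeds description. Conditioning on $\cut=k$, the interleaving probabilities telescope so that each of the $\binom{n}{k}$ merges is equally likely; thus each (cut, interleaving) outcome carries probability $2^{-n}$, and an outcome is encoded by the set $S\subseteq\{1,\dots,n\}$ of positions fed from the first packet $\{1,\dots,|S|\}$. Splitting according to whether position $i$ is fed from the first packet (which forces $|S\cap\{1,\dots,i\}|=j$, i.e.\ $j\le i$) or from the second (which forces $|S\cap\{i+1,\dots,n\}|=j-i$, i.e.\ $j\ge i$) and counting the admissible sets $S$ gives
\[
p_{i,j}=\binom{i-1}{j-1}\,2^{-i}\,[\,j\le i\,]+\binom{n-i}{j-i}\,2^{-(n-i+1)}\,[\,j\ge i\,],
\]
which I would sanity-check via $\sum_{j}p_{i,j}=1$ and the reflection symmetry $p_{\,n+1-i,\,n+1-j}=p_{i,j}$.

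\emph{Third}, I would locate the maximizer. The law of the card in position $i$ is bimodal: a first-packet lobe $\binom{i-1}{j-1}2^{-i}$ supported on $j\le i$ and peaked at $j=\lfloor i/2\rfloor+1$, and a second-packet lobe supported on $j\ge i$ and peaked near $j=(n+i)/2$. These peaks occupy disjoint ranges of $j$, so the global maximizer lies in whichever lobe is taller. Writing $f(m):=\binom{m}{\lfloor m/2\rfloor}2^{-m}$, the two peak heights are $\tfrac12 f(i-1)$ and $\tfrac12 f(n-i)$; since $f$ is non-increasing (indeed $f(2t-1)=f(2t)>f(2t+1)$, from $\binom{2t}{t}=2\binom{2t-1}{t-1}$), the first lobe dominates precisely when $i-1<n-i$, i.e.\ in the top half. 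This yields the optimal guesses $1,2,2,3,3,\dots$ on top, and the reflection symmetry delivers the mirror sequence $\dots,n-1,n-1,n$ on the bottom.

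\emph{Finally}, the hard part is the comparison of the two lobe heights near the centre $i=h$. Because $f$ is only weakly decreasing (constant on the pairs $\{2t-1,2t\}$), there is an $O(1)$ window of central positions where $f(i-1)=f(n-i)$ and both lobes are simultaneously optimal, and for even $i$ there is additionally a two-way tie \emph{within} the first lobe between $j=i/2$ and $j=i/2+1$. This is why the statement is asymptotic (``for large $n$'') and resolves ties by convention (taking the larger admissible label); I would then verify that these $O(1)$ ambiguous central positions are irrelevant to the limit law treated in the sequel. Establishing strict dominance of the correct lobe for all non-central $i$, together with the exact, parity-dependent peak location of the binomial coefficient, is thus the main technical content.
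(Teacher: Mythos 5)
Your proposal is correct, and at its core it follows the same route as the paper: obtain the exact position--card probabilities under the Gilbert--Shannon--Reeds model, then maximize them position by position, which reduces to locating maxima of binomial coefficients. The differences are in execution, and they are worth noting. Where the paper computes the law $m_{i,j}$ of the \emph{position} of card $i$ by conditioning on the cut and counting interleavings, and then disposes of the case $j>i$ by a relabeling-symmetry argument, you compute the transposed quantity $p_{i,j}=m_{j,i}$ in one stroke by encoding each (cut, interleaving) outcome as the subset $S$ of positions fed from the first packet, each outcome having probability $2^{-n}$; your two indicator terms agree with the paper's formulas and add up correctly on the diagonal to $m_{i,i}=2^{-i}+2^{-(n-i+1)}$. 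More importantly, you supply two steps the paper compresses: the explicit no-feedback reduction to termwise maximization (linearity of expectation plus the absence of any constraint tying the guesses at different positions together), and the actual ``close inspection of the binomial coefficients'' that the paper leaves to the reader --- the bimodal lobe structure, the peak heights $\tfrac12 f(i-1)$ versus $\tfrac12 f(n-i)$ with $f(m)=\binom{m}{\lfloor m/2\rfloor}2^{-m}$, and the monotonicity $f(2t-1)=f(2t)>f(2t+1)$ that decides which lobe wins. Your $O(1)$ central tie window and the even-$i$ within-lobe ties reproduce exactly the non-uniqueness sets $\mathcal{S}_j$ of the paper's subsequent remark; the only micro-correction is that at $i=2$ the within-lobe ``tie'' between $j=1$ and $j=2$ is in fact broken by the exponentially small diagonal term $2^{-(n-1)}$ (consistent with $\mathcal{S}_2=\{2\}$ in that remark), which affects uniqueness only, not the optimality of the stated strategy.
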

\begin{proof}
For the sake of completeness and to make this work more self-contained, we add the nice and short argument justifying this strategy. From the Gilbert-Shannon-Reeds model one can readily determine the probability $m_{i,j}=m_{i,j}(n)$ that the card labeled $i$ ends up at position $j$ after a riffle shuffle, starting with a deck of $n$ ordered cards:
\begin{equation}
\begin{split}
m_{i,i}&=\frac{1}{2^n}\big(2^{i-1}+2^{n-i}\big),\\
m_{i,j}&=\frac{1}{2^{n-j+1}}\binom{n-j}{i-j},\quad j<i, 
\end{split}
\end{equation}
and the symmetry
\[
m_{i,j}=m_{n-i+1,n-j+1}.
\]
This follows directly by considering the different cutting positions $k$, $0\le k\le n$, and the number of different interleavings,
such that card labeled $i$ ends up at position $j$. Let $j<i$. Then, cuts at positions $k\ge i$ cannot contribute, 
as all cards labeled $1$ up to $i-1$ are still before $i$ after interleaving and the final position is at least $i$. Thus, we are left with cuts at $1\le k<i$. 
As the cards $k+1$ up to $i-1$, a total of $i-k-1$, of the second packet are always before $i$, we require exactly $j-i+k$ of the first packet out of the cards labeled $\{1,\dots,k\}$ to be interleaved
before $j$. There are $\binom{j-1}{j-i+k}$ ways to do so. The remaining $i-j$ cards of the first packet have to be interleaved above $j$, leading to $\binom{n-j}{i-j}$ different ways. 
In total,
\[
m_{i,j}=\frac{1}{2^n}\sum_{k=i-j}^{i-1}\binom{j-1}{j-i+k}\binom{n-j}{i-j}=\frac1{2^{n-j+1}}\binom{n-j}{i-j}.
\]
In the case $i=j$ there are additional contributions from the cuts at $k\ge i$, leading
to 
\[
m_{i,i}%=\frac1{2^{n-i+1}}+\sum_{k=i}^{n}\frac{\binom{n}{k}}{2^n}\cdot \binom{n}{n-i}\cdot\frac{k!}{(n-k)!}{n!}
=\frac1{2^{n-i+1}}+\sum_{k=i}^{n}\frac{\binom{n-i}{k-i}}{2^n}
=\frac1{2^{n-i+1}}+\frac1{2^{i}}.
\]
The case $j>i$ can be treated similarly; we opt to recall a nice symmetry argument of \cite{Ciucu1998}:
We imagine having a second set of numbers on our cards, in which the cards are labeled consecutively from 1 on bottom through $n$ on top.
We call this the ''upward labeling'', compared to the original ''downward labeling''.
It is clear that, after a riffle shuffle, card $i$ ends up in position $j$ in
downward labeling if and only if card $n-i+1$ goes to position $n-j+1$ in
the upward labeling. Since the probability distributions involved in the riffle
shuffle have a vertical symmetry axis, we obtain the stated symmetry. 
Finally, the best guess $g_{j}$ at the card in position $j$ 
of the optimal strategy $\mathcal{G}^{\ast}=g_1g_2\dots g_n$
is determined by guessing the asymptotically largest probability,
\[
g_{j}=\max_{i}\{m_{i,j}\}, \quad 1\le j\le n,
\]
which can be obtained by a close inspection of the binomial coefficients.
\end{proof}

\begin{remark}
As already pointed out in~\cite{NFNW-KT2022}, the optimal strategy is not unique for a one-time riffle shuffle.
In particular, for the card position $j$, the player can optimally choose to guess any number from the set $\mathcal{S}_{j}$, where
$\mathcal{S}=(\mathcal{S}_{j})$:
\[
\mathcal{S}= \{1\}, \{2\}, \{2\}, \{2, 3\}, \{3\}, \{3, 4\}, \{4\}, \{4, 5\},\dots\quad\text{top half}
\]
and 
\[
\dots, \{n-3,n-2\},\{n-2\},\{n-2,n-1\},\{n-1\},\{n-1\},\{n\}\quad\text{bottom half}.
\]
However, in our analysis we follow exclusively the strategy $\mathcal{G}^{\ast}$, which is the one that can be extended naturally to multiple-time riffle shuffle~\cite{NFNW-KT2022}.
\end{remark}

\begin{example}
We consider the case $n=3$ and the $2^3$ possible permutations $\sigma$. In Table~\ref{Table1} we highlight all $2^3-3=5$ different permutations, colored cyan, as well as the cut positions and number of correct guesses.

\begin{table}[!htb]
%\setcellgapes{.6mm}  \makegapedcells  %<- this does nice spacing, but not compatible with rowcolor
\begin{tblr}{
colspec={||c||c||c|c|c|| c|c|c|| c||},
cell{1}{2}={cyan9},cell{1}{4}={cyan9},cell{1}{5}={cyan9},cell{1}{7}={cyan9},cell{1}{8}={cyan9},
}
\hline
%\rowcolor{grau}
$\sigma$&  \entr\be\bz\bd & \entr\be\bz\bd&\entr21\bd&\entr231  &  \entr\be\bz\bd&\entr\be32&\entr312 & \entr\be\bz\bd\\%[2mm]
\hline
\cut &0  &  1&1&1  &  2&2&2   & 3\\ 
\hline
$X$ & 3 & 3&1&0  & 3&1&0 &3\\
\hline
\end{tblr}
\caption{Case $n=3$: optimal strategy $\mathcal{G}^{\ast}=(1,2,3)$ and the number of correct guesses.}
\label{Table1}
\end{table}

We observe that under the optimal strategy we have
\[
\P\{X_3=3\}=\frac12,\quad\P\{X_3=2\}=0, \quad\P\{X_3=1\}=\P\{X_3=0\}=\frac14.
\]
\end{example}

\begin{example}
We consider the case $n=4$ and the $2^4$ possible permutations $\sigma$. Again, in Table~\ref{Table2} we highlight all $2^4-4=12$ different permutations, colored cyan, as well as the cut positions and number of correct guesses.
\renewcommand{\arraystretch}{0.7}
{\footnotesize
\begin{table}[!htb]
%\setcellgapes{.6mm}  \makegapedcells  %<- this does nice spacing, but not compatible with rowcolor
\begin{tblr}{colsep={4pt},
colspec={||c || c || c|c|c|c || c|c|c|c|c|c || c|c|c|c ||  c ||},
cell{1}{2}={cyan9},cell{1}{4}={cyan9},cell{1}{5}={cyan9},cell{1}{6}={cyan9},cell{1}{8}={cyan9},cell{1}{9}={cyan9},
cell{1}{10}={cyan9},cell{1}{11}={cyan9},cell{1}{12}={cyan9},cell{1}{14}={cyan9},cell{1}{15}={cyan9},cell{1}{16}={cyan9},
}
\hline
%\rowcolor{grau}
$\sigma$ &  \entry\be\bz\bd\bv &   \entry\be\bz\bd\bv&\entry21\bd\bv&\entry231\bv&\entry2341  &  \entry\be\bz\bd\bv&\entry\be32\bv&\entry\be342&\entry312\bv&\entry3142&\entry3412 & \entry\be\bz\bd\bv
& \entry\be\bz43 & \entry\be423 & \entry4123 & \entry\be\bz\bd\bv
\\%[2mm]
\hline
\cut &0  &  1&1&1&1  &  2&2&2&2&2&2   & 3&3&3&3 & 4\\ 
\hline
$X$ & 4 & 4&2&1&0  & 4&2&1&1&0&0 &4&2&1&0  &4\\
\hline
\end{tblr}
\caption{Case $n=4$: optimal strategy $\mathcal{G}^{\ast}=(1,2,3,4)$ and the number of correct guesses.}
\label{Table2}
\end{table}
}

Under the optimal strategy we obtain
\[
\P\{X_4=4\}=\frac5{16},\,\P\{X_4=3\}=0, \,\P\{X_4=2\}=\frac3{16}, \,\P\{X_4=1\}=\P\{X_4=0\}=\frac14.
\]
\end{example}

We further simulated the probability mass function by looking at the empirical probabilities $h_k$ for $n=200$, $1000$ and $5000$ with samples of size $N=50000$ for $n=200$, $1000$ and sample size $N=100000$ for $n=5000$.

\begin{center}
 \begin{tikzpicture}
        \begin{axis}[height=5cm,	width=4.7cm]%, xlabel={$k$}, ylabel={$h_k$}
        \addplot table [x=x, y=y, col sep=comma] {200-50000.csv};
				\end{axis}
 \end{tikzpicture}
\quad
 \begin{tikzpicture}
        \begin{axis}[height=5cm, width=4.7cm]
        \addplot table [x=x, y=y, col sep=comma] {1000-50000.csv};
				\end{axis}
 \end{tikzpicture}
\quad
 \begin{tikzpicture}
        \begin{axis}[height=5cm, width=4.7cm]
        \addplot table [x=x, y=y, col sep=comma] {5000-100000.csv};
        \end{axis}
 \end{tikzpicture}
\end{center}

\section{Distributional analysis and generating functions}
Let $X_n$ denote the random variable counting the number of correct guesses under the optimal strategy $\mathcal{G}^{\ast}$ in the no-feedback model after a single riffle shuffle, starting with $n$ ordered cards. In~\cite{KSTY2023}, the distribution of $X_n$ has been determined using
the generating function $f_n(q)$:
\[
\E(q^{X_n})=\frac{f_n(q)}{2^n}.
\]
Our starting point is the following nice result, giving a formula for $f_n(q)$ in terms of an auxiliary generating function $g_{m_{1},m_{2}}(q)$, which is described itself in a recursive way. 
\begin{lem}[Krityakierne et al.~\cite{KSTY2023}]
\label{lem:no1}
Let $h:=\lceil \frac{n}{2} \rceil$. The generating function $f_n(q)$ satisfies
\[
f_n(q)=4q^4-2(q^2+q^3)+\sum_{a=0}^{h}\sum_{b=0}^{n-h}g_{a,h-a}(q) \cdot g_{b,n-h-b}(q).
\]
Here, the generating function $g_{m_1,m_2}(q)$ is determined by the recurrence relation
\begin{equation}
\label{eqn:no1}
g_{m_1,m_2}(q)=q^{\delta(c,m_1)}g_{m_1-1,m_2}(q)+g_{m_1,m_2-1}(q),
\quad m_1,m_2\ge 0 \; \text{and} \; (m_{1},m_{2}) \neq (0,0),
\end{equation}
where $c=\lfloor \frac{m_1+m_2}{2}\rfloor+1$ and $\delta(x,y)$ denotes the Kronecker delta function, with initial values $g_{0,0}(q)=1$ and $g_{m_{1},m_{2}}(q) = 0$, for $m_{1}<0$ or $m_{2}<0$.
\end{lem}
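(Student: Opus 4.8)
The plan is to read $f_n(q)=\sum_{\sigma} q^{X(\sigma)}$ as a sum over the $2^n$ equally likely outcomes of the Gilbert--Shannon--Reeds shuffle, where each outcome is an interleaving of the two packets determined by a cut at some position $k$ (packet one being the cards $1,\dots,k$ and packet two the cards $k+1,\dots,n$, each kept in its original order). Since $\E(q^{X_n})=f_n(q)/2^n$ and for a cut at $k$ there are exactly $\binom{n}{k}$ interleavings, it suffices to evaluate the score $X(\sigma)$ on each interleaving and to organize the sum combinatorially. I would split the score as $X=X_{\mathrm{top}}+X_{\mathrm{bot}}$ according to whether a correct guess occurs among the positions $1,\dots,h$ or among the positions $h+1,\dots,n$, matching the two halves of the optimal strategy $\mathcal{G}^\ast$ from Proposition~\ref{Prop:firstCard}.

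The central step is to derive the recurrence \eqref{eqn:no1} by a peeling argument on one half. Consider the top half, fill its positions from the bottom ($p=h$) upward, and track how many cards of each packet remain to be placed. When $m_1+m_2$ cards are still unplaced we are filling position $p=m_1+m_2$, whose guess under $\mathcal{G}^\ast$ is exactly $\lfloor p/2\rfloor+1=\lfloor (m_1+m_2)/2\rfloor+1=c$. Because the order within each packet is preserved, the next card taken from packet one carries the label $m_1$, so it produces a correct guess precisely when $m_1=c$, contributing the factor $q^{\delta(c,m_1)}$ and leaving the subproblem $g_{m_1-1,m_2}$; a card taken from packet two carries a large label that does not meet the small guess and contributes the subproblem $g_{m_1,m_2-1}$ with no weight. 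This yields \eqref{eqn:no1}, with $g_{0,0}=1$ and vanishing boundary values, and by construction $g_{m_1,m_2}(q)$ is the generating polynomial $\sum q^{(\text{correct guesses})}$ over the $\binom{m_1+m_2}{m_1}$ interleavings of $m_1$ low and $m_2$ high cards in a half.

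To obtain the product form I would condition on the cut $k$ and on the number $a$ of packet-one cards landing in the top half; then the top- and bottom-half interleavings are chosen independently, the count $\binom{h}{a}\binom{n-h}{k-a}$ factorizes, and the two half-scores are independent, so the contribution factors as $g_{a,h-a}(q)\,g_{b,n-h-b}(q)$ with $b=(n-h)-(k-a)$ the number of packet-two cards in the bottom half. Applying $j\mapsto n-j+1$, $i\mapsto n-i+1$ and invoking the symmetry $m_{i,j}=m_{n-i+1,n-j+1}$ shows the bottom half is a mirror image of a top-half problem of size $n-h$, which is why the same function $g$ governs it. Since $k=a+(n-h-b)$ is determined by $(a,b)$, summing over all cuts and all $a$ re-indexes to the unconstrained double sum $\sum_{a=0}^{h}\sum_{b=0}^{n-h} g_{a,h-a}\,g_{b,n-h-b}$, whose terms account for all $\sum_{a,b}\binom{h}{a}\binom{n-h}{b}=2^n$ interleavings.

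The hard part is the correction term $4q^4-2(q^2+q^3)$. The generic scoring above silently assumes that a packet-two (high) card never matches a small top-half guess and, symmetrically, that a packet-one (low) card never matches a large bottom-half guess. These assumptions fail only for a bounded family of extreme configurations --- essentially the near-identity interleavings arising from cuts very close to $0$ or $n$, together with the junction at the middle --- where a high card slips into the top half with a label small enough to be guessed (and the mirror situation). I expect the main obstacle to be the exact bookkeeping showing that, for all sufficiently large $n$, these finitely many exceptional interleavings contribute the same net adjustment independent of $n$, namely the polynomial $4q^4-2(q^2+q^3)$; this is the step that genuinely requires explicit enumeration rather than the clean recursive structure, and it can be cross-checked against the small cases $n=3,4$ computed above (for $n=4$ the double sum gives $4+4q+5q^2+2q^3+q^4$, and adding the correction recovers the observed $f_4(q)=5q^4+3q^2+4q+4$).
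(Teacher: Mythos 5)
A preliminary remark: this paper never proves Lemma~\ref{lem:no1} at all --- it is imported verbatim from Krityakierne et al.~\cite{KSTY2023} and used as a black box --- so your proposal can only be judged on its own merits. On those merits it is structurally sound but incomplete. What you get right: reading $f_n(q)$ as the sum of $q^{X}$ over all $2^n$ (cut, interleaving) pairs; the peeling derivation of \eqref{eqn:no1} (the deepest unplaced packet-one card in the half indeed carries label $m_1$, and the guess at position $p=m_1+m_2$ under $\mathcal{G}^{\ast}$ is $\lfloor p/2\rfloor+1=c$, giving the factor $q^{\delta(c,m_1)}$); the factorization after conditioning on $(a,b)$, with $k=a+(n-h-b)$ recovering the cut and $\sum_{a,b}\binom{h}{a}\binom{n-h}{b}=2^n$ confirming the bijective accounting; and the numerical check at $n=4$, which is correct. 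One small imprecision: the marginal symmetry $m_{i,j}=m_{n-i+1,n-j+1}$ is weaker than what the bottom half requires; what you actually need is that reversing the deck and relabelling $i\mapsto n+1-i$ is a bijection of (cut, interleaving) pairs carrying the bottom-half problem of size $n-h$ to a top-half one. This is easily repaired.

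The genuine gap is the correction term, which you flag as ``the hard part'' but then only \emph{expect} to work out: you neither prove that the exceptional configurations form a set of bounded size independent of $n$, nor compute their net contribution, and your localization of them is partly wrong --- nothing exceptional happens at ``the junction at the middle''. The missing argument is short and must be supplied for the proof to be complete. If the $j$-th packet-two card (label $k+j$) occupies top-half position $p$, then $j\le p\le j+k$, so a match $k+j=\lfloor p/2\rfloor+1$ forces $k+j\le\lfloor (k+j)/2\rfloor+1$, i.e.\ $k+j\le 2$; hence the only exceptional configurations are the identity deck as produced by cuts $0$ and $1$, plus their mirror images (cuts $n$ and $n-1$). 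Their discrepancies are $q^4-q^2$ for cut $0$ (actual $q^4$ against generic $g_{0,h}\cdot g_{n-h,0}=1\cdot q^2$), $q^4-q^3$ for cut $1$ (generic $q\cdot q^2$), and the same for the mirrors, giving $2(q^4-q^2)+2(q^4-q^3)=4q^4-2(q^2+q^3)$. Note this computation uses $g_{m,0}=q^2$ for $m\ge 2$, hence requires $h\ge 2$ and $n-h\ge 2$, i.e.\ $n\ge 4$; this restriction cannot be waived, since for $n=3$ the right-hand side of the lemma equals $2+3q-q^3+4q^4$ while $f_3(q)=2+2q+4q^3$. So your hedge ``for all sufficiently large $n$'' is indeed necessary, but it should be made quantitative ($n\ge 4$) and backed by the enumeration above rather than left as an expectation.
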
 

In order to study the limit law of $X_n$, we interpret the results of Lemma~\ref{lem:no1} 
in a probabilistic way. Let $Y_{m_1,m_2}$ denote the random variable 
defined in terms of recurrence relation~\eqref{eqn:no1},
\begin{equation}
\label{eqn:no2}
\E(q^{Y_{m_1,m_2}})=\frac{g_{m_1,m_2}(q)}{g_{m_1,m_2}(1)}=\frac{g_{m_1,m_2}(q)}{\binom{m_1+m_2}{m_1}},
\end{equation}
where the latter equality holds, since for $q=1$ recurrence~\eqref{eqn:no1} is equivalent to Pascal's rule for the binomial coefficients.
Next we translate above recurrence relations into a distributional equation for $X_n$. 
\begin{prop}[Distributional equation for $X_n$]
\label{prop1}
The random variable $X_n$ satisfies for $n\to\infty$:
\[
X_n\sim Y_{h-J_h,J_h}+Y^{\ast}_{n-h-J^{\ast}_{n-h},J^{\ast}_{n-h}},
\]
where $Y$, $Y^{\ast}$ are distributed according to~\eqref{eqn:no2}, $J$, $J^{\ast}$ are binomially distributed with parameter $p=1/2$
and parameters $h:=\lceil \frac{n}{2} \rceil$ and $n-h$, respectively. Moreover, all random variables are independent. 
\end{prop}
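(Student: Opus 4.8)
The plan is to compute the probability generating function (PGF) of the claimed right-hand side directly and match it, up to an asymptotically negligible term, against $\E(q^{X_n})=f_n(q)/2^n$ as furnished by Lemma~\ref{lem:no1}. First I would evaluate the PGF of the first summand by conditioning on $J_h\sim\Bin(h,1/2)$: by the law of total expectation together with the defining relation~\eqref{eqn:no2}, and using $\binom{(h-a)+a}{h-a}=\binom{h}{a}$, the binomial normalisation in $\E(q^{Y_{h-a,a}})=g_{h-a,a}(q)/\binom{h}{a}$ cancels exactly against $\P\{J_h=a\}=\binom{h}{a}/2^h$. This yields
\[
\E\big(q^{Y_{h-J_h,J_h}}\big)=\frac{1}{2^h}\sum_{a=0}^{h}g_{h-a,a}(q)=\frac{1}{2^h}\sum_{a=0}^{h}g_{a,h-a}(q),
\]
where the last equality is the reindexing $a\mapsto h-a$ that puts the sum into the exact shape occurring in Lemma~\ref{lem:no1}. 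An identical computation for the second summand, with $J^{\ast}_{n-h}\sim\Bin(n-h,1/2)$, gives the PGF $2^{-(n-h)}\sum_{b=0}^{n-h}g_{b,n-h-b}(q)$.

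Next, since all four random variables are independent by assumption, the PGF of the sum factorises into the product of the two PGFs just computed. Multiplying them and using $2^h\cdot 2^{n-h}=2^n$ produces precisely
\[
\frac{1}{2^n}\sum_{a=0}^{h}\sum_{b=0}^{n-h}g_{a,h-a}(q)\,g_{b,n-h-b}(q),
\]
which by Lemma~\ref{lem:no1} equals $f_n(q)/2^n$ apart from the stray term $(4q^4-2(q^2+q^3))/2^n$. Hence the two PGFs differ only by this fixed polynomial divided by $2^n$. Since $(4q^4-2(q^2+q^3))/2^n\to 0$ as $n\to\infty$ (uniformly for $q$ in any compact set), the PGF of $X_n$ and that of $Y_{h-J_h,J_h}+Y^{\ast}_{n-h-J^{\ast}_{n-h},J^{\ast}_{n-h}}$ agree in the limit; equivalently, their mass functions differ by $O(2^{-n})$ at each point, which is the asserted asymptotic distributional identity.

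I do not expect a genuine obstacle here: the argument is essentially bookkeeping built on Lemma~\ref{lem:no1} and~\eqref{eqn:no2}. The only points demanding care are the cancellation of the binomial coefficients, so that conditioning on a $\Bin$ variable exactly reconstructs the unnormalised $g_{m_1,m_2}(q)$; the reindexing that aligns the single sums with the double sum of the lemma; and the precise reading of $\sim$ as asymptotic rather than exact equality in distribution, the exponentially small discrepancy being exactly the correction term $4q^4-2(q^2+q^3)$.
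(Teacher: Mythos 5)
Your proposal is correct and follows essentially the same route as the paper: both rest on the binomial-coefficient cancellation $g_{a,h-a}(q)/\binom{h}{a}\cdot\binom{h}{a}/2^h$ that turns the double sum in Lemma~\ref{lem:no1} into a product of two mixture PGFs, with the polynomial $4q^4-2(q^2+q^3)$ divided by $2^n$ as the exponentially small discrepancy (the paper packages this discrepancy into ``hat'' versions of $Y$, $Y^\ast$ differing only on the values $\{2,3,4\}$, exactly matching your observation that the mass functions differ by $O(2^{-n})$ at finitely many points). The only cosmetic difference is direction: the paper expands $\E(q^{X_n})$ into the mixture form, while you build the PGF of the right-hand side and match it against Lemma~\ref{lem:no1}.
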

\begin{proof}
By Lemma~\ref{lem:no1} we have
\begin{align*}
\E(q^{X_n})&=\frac{4q^4-2(q^2+q^3)}{2^n}+\frac{1}{2^n}\sum_{a=0}^{h}\sum_{b=0}^{n-h}g_{a,h-a}(q) \cdot g_{b,n-h-b}(q)\\
&=\frac{4q^4-2(q^2+q^3)}{2^n}+\sum_{a=0}^{h}\frac{g_{a,h-a}(q)}{\binom{h}{a}}\cdot\frac{\binom{h}{a}}{2^h}\sum_{b=0}^{n-h}\frac{g_{b,n-h-b}(q)}{\binom{n-h}{b}}\cdot\frac{\binom{n-h}{b}}{2^{n-h}}\\
&=\frac{4q^4-2(q^2+q^3)}{2^n}\\
& \qquad \mbox{} +\sum_{a=0}^{h}\E(q^{Y_{a,h-a}})\cdot \P\{J_h=a\}\sum_{b=0}^{n-h}\E(q^{Y^{\ast}_{b,n-h-b}})\P\{J^{\ast}_{n-h}=b\}.
\end{align*}
Using that the product of probability generating functions corresponds to a sum of independent random variables, we obtain the distributional equation
\begin{equation*}
  X_{n} \law \hat{Y}_{h-J_{h},J_{h}} + \hat{Y}^{\ast}_{n-h-J^{\ast}_{n-h},J^{\ast}_{n-h}},
\end{equation*}
where the "hat" versions differ from their ordinary versions only on the values $\{2,3,4\}$, with the difference tending to zero exponentially fast for $n \to \infty$. Thus, we can safely neglect this difference when characterizing the limiting behaviour.
\end{proof}

\subsection{Limit law} 
From the properties of the binomial distribution
we know that $J_h=\Bin(h,\frac12)$ satisfies
\begin{equation}\label{eqn:binomial_approximation}
J_h\sim \mu_J + \sigma_J\cdot \mathcal{N},
\end{equation}
with $\mu_J =\frac{h}{2}\sim \frac{n}{4}$, $\sigma_J=\frac{\sqrt{h}}{2}\sim \frac{\sqrt{n}}{2\sqrt{2}}$
and $\mathcal{N}=\mathcal{N}(0,1)$ the standard normal distribution. In view of Proposition~\ref{prop1} this implies that we need to know the distribution
of $Y_{m_1,m_2}$ with parameters 
\[
m_1\sim h-\mu_j-\sigma_J\cdot t,\quad m_2\sim \mu_j+\sigma_J\cdot t,\quad t\in\R.
\]
Thus we require the limit law of $Y_{m_1,m_2}$, for $m_1,m_2\to\infty$ and satisfying the assumptions above. In order to analyze recurrence relation~\eqref{eqn:no1} and thus $Y_{m_1,m_2}$, we proceed similar to the two-color card guessing game~\cite{PK2023}, setting up a suitable bijection. This bijection allows to analyze $Y_{m_1,m_2}$ in terms of certain Dyck paths by using tools from Analytic Combinatorics~\cite{FlaSed}. 

\smallskip

First, according to recurrence~\eqref{eqn:no1} we consider the sample paths from $(m_1,m_2)$ to $(0,0)$, $m_1,m_2\ge 0$, with steps $(-1,0)$, ``left'', and $(0,-1)$, ``down'', where the leftward steps carry a weight $q$ if $c=\lfloor \frac{m_1+m_2}{2}\rfloor+1$. 
Next we reverse the direction of the walks, thus going from $(0,0)$ to $(m_1,m_2)$, and then rotate the coordinate system clockwise by 45 degrees. 
After scaling, the resulting walks are directed walks of length $m_1+m_2$ with Dyck steps $(1,1)$, ``upward'', and $(1,-1)$, ``downward'', starting at the origin and ending at $(m_{1}+m_{2},m_{2}-m_{1})$, see Figure~\ref{pic1}.

\begin{figure}[!htb]
\includegraphics[scale=0.5]{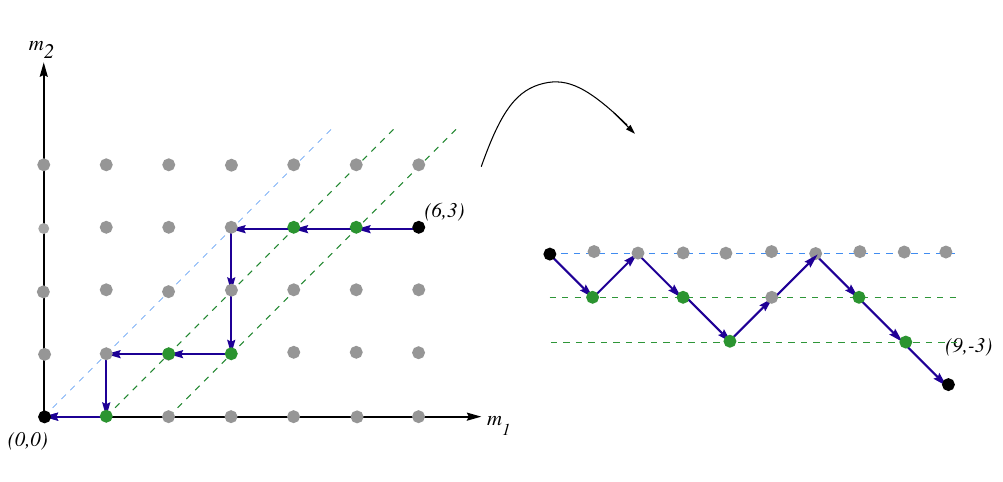}%
\caption{Mapping of a sample path of $Y_{6,3}$ to a directed lattice path from the origin to $(9,-3)$.}%
\label{pic1}%
\end{figure}

It remains to translate the weight $q^{\delta(c,m_1)}$, with $c=\lfloor \frac{m_1+m_2}{2}\rfloor+1$, to the directed paths. We consider all four different cases resulting from the parity of $m_{1},m_{2}$, where we obtain the following.
\begin{itemize}
\item Both $m_1,m_2$ are even or both $m_1,m_2$ are odd:
\[
\Big\lfloor \frac{m_1+m_2}{2}\Big\rfloor+1=\frac{m_1+m_2}{2}+1=m_1,\quad \text{which yields} \quad m_1=m_2+2.
\]
\item $m_1$ is even and $m_2$ odd or vice versa:
\[
\Big\lfloor \frac{m_1+m_2}{2}\Big\rfloor+1=\frac{m_1+m_2-1}{2}+1=m_1,\quad \text{which yields} \quad m_1=m_2+1.
\]  
\end{itemize}
This implies that $q$ counts the number of contacts with the two lines
\[
g_1\colon \ y=x-1, \quad g_2\colon\ y=x-2.
\]
After rotation, i.e., for Dyck paths, this implies that we count contacts with the lines $y=-1$, corresponding to $g_1$, as well as $y=-2$, corresponding to $g_2$.
However, as only left steps from $(m_1,m_2)\to(m_1-1,m_2)$ can carry a weight in the original sample paths, in the Dyck path setting a contact
with these two lines is only counted when occurring after a downward step, see Figure~\ref{pic2}.
Above findings are summarized as follows.

\begin{figure}[!htb]
\includegraphics[scale=0.5]{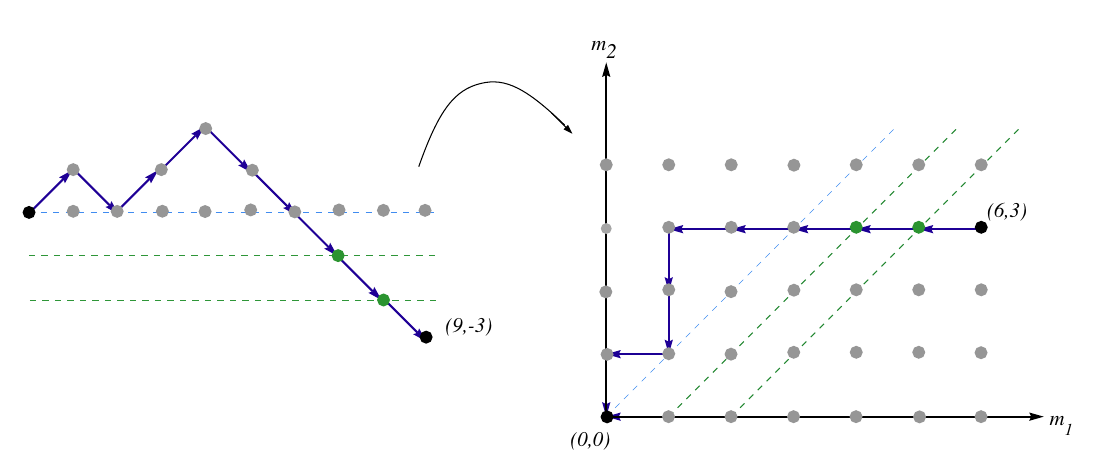}%
\caption{Mapping of a directed lattice path from the origin to $(9,-3)$ to its corresponding sample path of $Y_{6,3}$.}%
\label{pic2}%
\end{figure}

\begin{prop}[Sample paths of the card guessing game and Dyck paths]
Let $\mathcal{S}_{m_1,m_2}$ denote the set of weighted sample paths from $(m_1,m_2)$ to $(0,0)$, $m_1,m_2\ge 0$, with steps $(-1,0)$,
carrying a weight $q$ if $c=\lfloor \frac{m_1+m_2}{2}\rfloor+1$, and $(0,-1)$. Then, 
$\mathcal{S}_{m_1,m_2}$ is in bijection with the set $\mathcal{D}_{m_1+m_2}$ of Dyck paths with step sets $(1,1)$ and $(1,-1)$ of length $m_1+m_2$, starting at the origin and ending at $(m_1+m_2,m_2-m_1)$, where the contacts with $y=-1$ and $y=-2$ are counted after a downward step.
\end{prop}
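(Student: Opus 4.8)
The plan is to exhibit the bijection as an explicit composition of three elementary, individually invertible operations — reversal of the step sequence, a clockwise $45^\circ$ rotation, and a rescaling — and then to check separately that this map transports the weighting correctly. Since a sample path in $\mathcal{S}_{m_1,m_2}$ is nothing but an arrangement of $m_1$ leftward steps $(-1,0)$ and $m_2$ downward steps $(0,-1)$, and a path in $\mathcal{D}_{m_1+m_2}$ ending at $(m_1+m_2,m_2-m_1)$ must use exactly $m_1$ down-steps $(1,-1)$ and $m_2$ up-steps $(1,1)$ (solving $u+d=m_1+m_2$, $u-d=m_2-m_1$), both sets have cardinality $\binom{m_1+m_2}{m_1}$. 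Thus it suffices to produce a weight-preserving map that is bijective on the underlying unweighted paths; note that neither side carries a positivity constraint, so there is no lattice-path boundary condition to worry about.

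First I would set up the unweighted bijection. Reading a sample path backwards turns the descending walk from $(m_1,m_2)$ to $(0,0)$ into an ascending walk from $(0,0)$ to $(m_1,m_2)$ with steps $(1,0)$ (former left steps) and $(0,1)$ (former down steps). Applying the clockwise $45^\circ$ rotation followed by scaling by $\sqrt{2}$ realises the linear map $(x,y)\mapsto(x+y,\,y-x)$, under which $(1,0)\mapsto(1,-1)$ and $(0,1)\mapsto(1,1)$; hence former left steps become down Dyck steps, former down steps become up Dyck steps, and the endpoint $(m_1,m_2)$ is sent to $(m_1+m_2,m_2-m_1)$, as required. Since reversal and the relabeling of the two step types are each bijective, this map is a bijection between the unweighted path sets.

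The substantive step is the weight correspondence, and this is where care is needed. A left step of the original path carries weight $q$ exactly when it departs from a point $(m_1',m_2')$ with $\lfloor (m_1'+m_2')/2\rfloor+1=m_1'$; by the parity case analysis already carried out above, this condition is equivalent to $(m_1',m_2')$ lying on one of the lines $g_1\colon y=x-1$ or $g_2\colon y=x-2$, and under the rotation these two lines are sent to the horizontal lines $y=-1$ and $y=-2$. The main subtlety is the passage through the reversal: the weight is attached to the position reached \emph{before} a left step in the descending walk, which after reversal is the position reached \emph{after} the corresponding right step, i.e.\ after a down Dyck step. I would stress that, because only horizontal (left/right) steps may carry a weight in the original model, a contact with $y=-1$ or $y=-2$ that is reached from below by an up Dyck step must \emph{not} be counted; this is precisely the qualifier ``counted after a downward step'' in the statement. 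Combining these observations, the weight $q$ attaches in the Dyck picture exactly to those down-steps landing on height $-1$ or $-2$, which matches the claim.

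The main obstacle I anticipate is exactly this weight bookkeeping across the reversal together with the parity distinction in the floor condition: one must verify that the floor condition is equivalent to membership in $g_1\cup g_2$ in all four parity cases, and that the ``after a downward step'' restriction faithfully reproduces the fact that only leftward steps are weighted in $\mathcal{S}_{m_1,m_2}$. The geometric parts (reversal, rotation, scaling, endpoint) are routine once the step-type dictionary is fixed; the care lies in confirming that a weighted contact in $\mathcal{D}_{m_1+m_2}$ corresponds to a weighted left step in $\mathcal{S}_{m_1,m_2}$ and to nothing else.
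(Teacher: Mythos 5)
Your proposal is correct and takes essentially the same route as the paper: reverse the walk, rotate clockwise by $45^{\circ}$ and scale so that left steps become down-steps $(1,-1)$ and down steps become up-steps $(1,1)$, verify via the four parity cases that the weight condition $\lfloor\frac{m_1'+m_2'}{2}\rfloor+1=m_1'$ means $(m_1',m_2')$ lies on $y=x-1$ or $y=x-2$ (hence on $y=-1$ or $y=-2$ after rotation), and note that since only left steps carry weight, only contacts reached by a downward step are counted. Your additional cardinality count and the remark on the absence of a positivity constraint are harmless but not needed once the map is exhibited as a composition of invertible operations.
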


\begin{prop}
\label{prop:LinExp}
Assume that the numbers $m_1$, $m_2$ satisfy $m_1-m_2\sim t\cdot \sqrt{m_1}$, as $m_1\to\infty$, with $t>0$. Then, the random variable $Y_{m_1,m_2}$ is asymptotically linear exponentially distributed,
\[
\frac{Y_{m_1,m_2}}{\sqrt{m_1}}\claw \LinExp(\textstyle{\frac{t}{2}},\textstyle{\frac{1}{2}}),
\]
or equivalently by stating the cumulative distribution function,
\[
\P\{Y_{m_1,m_2}\le z\sqrt{m_1}\}\to 1-e^{-\frac{z(2t+z)}4}, \quad z \ge 0.
\]
An analogous result holds for $m_2-m_1\sim t\cdot \sqrt{m_1}$, as $m_2\to\infty$, with $t>0$.
\end{prop}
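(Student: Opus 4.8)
The plan is to analyze $Y_{m_1,m_2}$ through the Dyck-path bijection of the preceding proposition, under which $Y_{m_1,m_2}$ equals the number of down-steps of a \emph{uniformly random} lattice path from the origin to $(m_1+m_2,m_2-m_1)$ that land on one of the two decorated levels $y=-1$ or $y=-2$. Since $m_1-m_2\sim t\sqrt{m_1}>0$, such a path is a $\pm1$ random-walk bridge of length $N:=m_1+m_2\sim 2m_1$ ending at height $-(m_1-m_2)\sim-t\sqrt{m_1}$, hence a bridge with a slight downward drift. After the diffusive rescaling by $\sqrt{N}$ both decorated levels collapse to height $0$, and $Y_{m_1,m_2}$ is the sum of the numbers of down-steps onto $-1$ and onto $-2$; each of these counts the down-crossings of one edge near height $0$ and, suitably rescaled, converges to the same constant multiple of the local time at $0$ of the limiting Brownian bridge. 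This picture already predicts the $\sqrt{m_1}$ scale, the Gaussian-type factor $e^{-z^{2}/4}$, and the dependence on $t$ through the endpoint; I will use it as a guide and a consistency check, but carry out the proof with generating functions, as in~\cite{PK2023}.

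First I would make the generating function of Lemma~\ref{lem:no1} explicit. Writing $M:=m_1-m_2$ and $G_M(x,q):=\sum_{n\ge0}g_{(n+M)/2,(n-M)/2}(q)\,x^{n}$, the bijection identifies $G_M(x,q)$ as the generating function of $\pm1$ walks from $0$ to $-M$ in which every down-step onto level $-1$ or $-2$ carries a weight $q$. Because the decoration is confined to two levels, an excursion decomposition at these levels — equivalently the kernel method of~\cite{FlaSed} — resums the contributions of all excursions touching $\{-1,-2\}$ into a single rational correction. I anticipate a closed form of the shape
\[
G_M(x,q)=\frac{W(x)^{M}}{\sqrt{1-4x^{2}}}\,\Psi(x,q),\qquad W(x)=\frac{1-\sqrt{1-4x^{2}}}{2x},
\]
with $\Psi$ rational in $W(x)$ and $q$ and $\Psi(x,1)=1$ (the case $q=1$ must reproduce the bridge generating function $\sum_n\binom{n}{(n-M)/2}x^n$). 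Then $\E(q^{Y_{m_1,m_2}})=[x^{N}]\,G_M(x,q)\big/\binom{N}{m_1}$.

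To extract the limit law I would set $q=e^{-s/\sqrt{m_1}}$ and estimate $[x^{N}]G_M(x,q)$ by singularity/saddle-point analysis near the dominant singularity $x=\tfrac12$, where $\sqrt{1-4x^{2}}$ and $1-W(x)$ are small. In this regime the factor $W(x)^{M}$ with $M\sim t\sqrt{m_1}$ and the perturbation $\Psi(x,e^{-s/\sqrt{m_1}})$ combine, after normalization by $\binom{N}{m_1}$, into a proper Laplace transform. Concretely I would show
\[
\E\big(e^{-sY_{m_1,m_2}/\sqrt{m_1}}\big)\ \longrightarrow\ \int_0^{\infty}e^{-sz}\,\frac{t+z}{2}\,e^{-z(2t+z)/4}\,dz,
\]
the Laplace transform of the density $\tfrac{t+z}{2}e^{-z(2t+z)/4}$, and conclude weak convergence by the continuity theorem; integrating the density gives the stated distribution function $1-e^{-z(2t+z)/4}$. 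An equivalent route, better aligned with the moment results of the paper, is the method of moments: the factorial moment $\E(\fallfak{Y_{m_1,m_2}}{r})$ counts ordered $r$-tuples of marked down-steps, i.e.\ lattice paths constrained to pass through $r$ prescribed contacts, which by the reflection principle reduces to sums of binomial (ballot) numbers; evaluating these asymptotically and matching $\E\big((Y_{m_1,m_2}/\sqrt{m_1})^{r}\big)$ to the moments of $\LinExp(\tfrac t2,\tfrac12)$ suffices, since that law is determined by its moments.

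The main obstacle I expect is the \emph{uniformity} of the asymptotics in this scaling window. The endpoint deviation $M/\sqrt{N}\to t/\sqrt2$ and the mark parameter $(1-q)\sqrt{m_1}\to s$ are both of order $1/\sqrt{m_1}$, so they enter the saddle-point estimate at the same order and must be balanced precisely to produce the quadratic exponent $-z^{2}/4$; a pointwise expansion does not suffice, and one needs uniform error control as the saddle tends to $x=\tfrac12$ while $q\to1$ (in the moment approach the corresponding difficulty is the uniform asymptotic evaluation of the multi-index ballot sums). Finally, the case $m_2-m_1\sim t\sqrt{m_1}$ follows from the same analysis applied to the reflected walk / upward labeling used in the proof of Proposition~\ref{Prop:firstCard}: the drift is now upward, but the number of contacts with the two decorated lines transforms symmetrically, yielding the same limit law.
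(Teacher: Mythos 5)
Your proposal is correct and rests on the same infrastructure as the paper: the identical bijection to $\pm 1$-walks ending at $(m_1+m_2,\,m_2-m_1)$ with weight $q$ on down-steps onto levels $-1,-2$, and the identical generating function. Indeed, your anticipated closed form is exactly the paper's formula~\eqref{eqn:u2}: since $1-2z^{2}E(z)=\sqrt{1-4z^{2}}$, the paper's $G(z,q)=q^{2}\big(zE(z)\big)^{m_{1}-m_{2}}\big/\big(1-2z^{2}qE(z)\big)$ can be rewritten as $\frac{(zE(z))^{m_{1}-m_{2}}}{\sqrt{1-4z^{2}}}\,\Psi(z,q)$ with $\Psi(z,q)=\frac{q^{2}(1-2P(z^{2}))}{1-2qP(z^{2})}$ and $\Psi(z,1)=1$, matching your ansatz. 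The routes diverge only in the endgame. You propose to carry out the coefficient asymptotics yourself: a uniform saddle-point/singularity analysis of $[x^{N}]G_{M}(x,q)$ at $q=e^{-s/\sqrt{m_{1}}}$, $M\sim t\sqrt{m_{1}}$, yielding convergence of the Laplace transform to that of the density $\frac{t+z}{2}e^{-z(2t+z)/4}$ (which you correctly identify as the $\LinExp(\frac{t}{2},\frac{1}{2})$ density), or alternatively a method-of-moments computation. The paper does no asymptotics at this point: it writes down the analogous generating function~\eqref{eqn:gf_zero_contacts} for the number $W_{m_{1},m_{2}}$ of zero-contacts of bridges, observes that the two generating functions agree up to a shift of one in the length and in $|m_{1}-m_{2}|$ together with a factor $q$ --- symbolically $Y_{m_{1},m_{2}}\sim W_{m_{1}-1,m_{2}}+2$ --- and then simply quotes the $\LinExp$ limit law for $W$ from~\cite{PK2023}. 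So the paper buys brevity through a transfer-to-known-result argument leaning on~\cite{PK2023}, whereas your proof is self-contained, at the price of exactly the obstacle you flag: uniform error control when the endpoint deviation and the mark parameter both live at scale $1/\sqrt{m_{1}}$. That obstacle is genuine but surmountable; it is the analysis performed in~\cite{PK2023}, and the paper itself remarks that this ``more detailed analysis by extraction of coefficients'' is left to the interested reader. One caution on your final step: reflecting the walk turns down-contacts at $\{-1,-2\}$ into up-contacts at $\{+1,+2\}$, which is not the original statistic, so the case $m_{2}-m_{1}\sim t\sqrt{m_{1}}$ is not literally a symmetry reduction; since both statistics differ from the number of crossings of level $0$ by $O(1)$, the limit law is nevertheless the same, but this deserves a sentence rather than an appeal to symmetry.
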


Before we prove this result, we discuss a motivation or, in other words, a back of the envelope explanation for it. Asymptotically, the number of down-contacts at levels $-1$ and $-2$ should be indistinguishable from the number of $W_{m_1,m_2}$ of returns to zero of Dyck paths of length $m_1+m_2$, starting at zero and ending at $m_2-m_1$. 
This latter quantity has been already analyzed in the context of two-color card guessing games, leading exactly to the same limit law. 
\begin{lem}[\cite{PK2023}]
For $m_{2} \sim m_{1}$, where the difference $d=m_{1}-m_{2}$ satisfies $d \sim t \sqrt{m_{1}}$, with $t > 0$: suitably scaled, $W_{m_{1},m_{2}}$ weakly converges to a linear exponential distribution, which is characterized via the distribution function
\begin{equation*}
  F(x) = 1-e^{-\frac{x(2t+x)}{4}}, \quad x > 0.
\end{equation*}
Thus, $W_{m_{1},m_{2}}$ is asymptotically linear exponentially distributed:
\begin{equation*}
  \frac{W_{m_1,m_2}}{\sqrt{m_{1}}} \claw \LinExp\big(\textstyle{\frac{t}{2}},\textstyle{\frac{1}{2}}\big).
\end{equation*}
\end{lem}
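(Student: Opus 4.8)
The plan is to derive an \emph{exact} closed form for the distribution of $W_{m_1,m_2}$ by generating functions, and then extract the scaling limit through a local limit computation. Write $N:=m_1$, $d:=m_1-m_2>0$ and $n=m_1+m_2=2N-d$; a path counted by $W_{m_1,m_2}$ has $m_2$ up-steps and $m_1$ down-steps and runs from $0$ to $-d$. First I would decompose such a path according to its returns to zero: it is a (possibly empty) sequence of \emph{arches} -- minimal excursions returning to $0$, each lying entirely above or entirely below the axis -- followed by a \emph{final descent} that leaves $0$ and stays strictly negative until reaching $-d$. With $D(z)=\frac{1-\sqrt{1-4z^2}}{2z^2}$ the Dyck generating function and $F(z)=zD(z)$ the first-passage generating function, a single arch (either sign) has generating function $2z^2D(z)$ and the final descent has generating function $F(z)^d$. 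Marking each return to zero by $w$ gives
\[
G(z,w)=\frac{F(z)^d}{1-2w\,z^2D(z)}=\frac{(1-S)^d}{(2z)^d\,\bigl(1-w(1-S)\bigr)},\qquad S=S(z):=\sqrt{1-4z^2},
\]
where I used $2z^2D(z)=1-S$ and $F(z)=(1-S)/(2z)$. Then $[z^nw^k]G(z,w)$ counts paths from $0$ to $-d$ of length $n$ with exactly $k$ returns to zero, and $[z^n]G(z,1)=\binom{n}{m_1}$.

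Next I would extract coefficients in closed form. Since $[w^k]\frac{1}{1-w(1-S)}=(1-S)^k$ and the factor $(2z)^{-d}$ merely shifts the index (note $n+d=2N$ is even), the task reduces to computing $[y^{N}]B(y)^{r}$ with $B(y)=\frac{1-\sqrt{1-4y}}{2}$ and $r=d+k$. As $B$ satisfies $B=y+B^2$, i.e.\ $B=y/(1-B)$, Lagrange inversion yields $[y^{N}]B^{r}=\frac{r}{N}\binom{2N-r-1}{N-1}$. Assembling the pieces and dividing by $\binom{n}{m_1}=\binom{2N-d}{N}$ produces the exact formula
\[
\P\{W_{m_1,m_2}=k\}=\frac{d+k}{m_1}\,2^k\,\frac{\binom{2m_1-d-k-1}{m_1-1}}{\binom{2m_1-d}{m_1}},\qquad 0\le k\le m_2,
\]
the $2^k$ recording the sign pattern of the $k$ arches and the binomial vanishing automatically for $k>m_2$. (I would sanity-check this against small cases, e.g.\ $(m_1,m_2)=(2,1)$ gives $\P\{W=0\}=\tfrac13,\ \P\{W=1\}=\tfrac23$.)

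With the exact formula in hand I would run a local limit analysis. Setting $k=x\sqrt{N}$ and $d=t\sqrt{N}$ and writing the binomial ratio as a telescoping product,
\[
2^k\,\frac{\binom{2N-d-k-1}{N-1}}{\binom{2N-d}{N}}=\frac12\cdot\frac{\prod_{j=0}^{k-1}\bigl(1-\tfrac{d+j}{N}\bigr)}{\prod_{i=0}^{k}\bigl(1-\tfrac{d+i}{2N}\bigr)},
\]
I would take logarithms and apply $\log(1-u)=-u+O(u^2)$; the leading sums evaluate to $-(tx+x^2/2)$ in the numerator and $-\tfrac12(tx+x^2/2)$ in the denominator, while the prefactor is $\frac{d+k}{m_1}=\frac{t+x}{\sqrt N}$. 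This delivers the pointwise local limit
\[
\sqrt{m_1}\,\P\{W_{m_1,m_2}=x\sqrt{m_1}\}\;\longrightarrow\;\Bigl(\tfrac{t}{2}+\tfrac{x}{2}\Bigr)e^{-(tx/2+x^2/4)}=:f(x),
\]
which is precisely the density of $\LinExp(\tfrac{t}{2},\tfrac12)$; integrating $f$ recovers $F(x)=1-e^{-x(2t+x)/4}$. I would upgrade this to the claimed weak convergence $\frac{W_{m_1,m_2}}{\sqrt{m_1}}\claw\LinExp(\tfrac{t}{2},\tfrac12)$ via Scheff\'e's lemma, using that both the discrete law and $f$ are probability (densities) of total mass $1$. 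The symmetric regime $m_2-m_1\sim t\sqrt{m_1}$ then follows by the up-down reflection of lattice paths.

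The hard part will be promoting the pointwise asymptotics to a genuine local limit theorem. One must control the $O(u^2)$ error in $\log(1-u)$ \emph{uniformly} over the summation range, where $(d+j)/N$ grows up to order $x/\sqrt N$ across $\sim x\sqrt N$ terms (so the cumulative squared contribution is formally $O(x^3/\sqrt N)$ but needs a bound uniform in $x$ on compacts), and one must dominate the tails in $x$ so that Scheff\'e's lemma -- equivalently, the convergence of the cumulative distribution function -- is justified. By contrast, the arch decomposition, the generating function, and the Lagrange-inversion step are routine once the decomposition is set up correctly.
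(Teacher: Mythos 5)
Your proof is correct, but note that the paper does not prove this lemma at all: it is quoted verbatim from \cite{PK2023}, and the surrounding text of the paper only rederives the relevant generating function, namely (in the paper's notation, equation~\eqref{eqn:gf_zero_contacts})
\[
\frac{1}{z}\cdot\frac{\big(zE(z)\big)^{|m_1-m_2|+1}}{1-2z^2qE(z)},
\]
which is exactly your $G(z,w)$, obtained from the same arch decomposition plus the first-passage/meander factor for the final descent. Where you and the cited source part ways is after this point: \cite{PK2023} (and the present paper, for the companion quantity $Y_{m_1,m_2}$) proceeds by analytic coefficient extraction in the spirit of Banderier--Flajolet, i.e.\ singularity analysis of $[z^n]$ applied to the $q$-structure of the scheme, whereas you extract coefficients \emph{exactly}: Lagrange inversion for $[y^{N}]B(y)^{r}$ yields the closed form
\[
\P\{W_{m_1,m_2}=k\}=\frac{d+k}{m_1}\,2^k\,\frac{\binom{2m_1-d-k-1}{m_1-1}}{\binom{2m_1-d}{m_1}},
\]
which I verified (it sums to $1$ and gives $(\tfrac13,\tfrac23)$ for $(m_1,m_2)=(2,1)$), and your subsequent product/logarithm computation of the local limit is algebraically correct and does recover the density $\big(\tfrac{t}{2}+\tfrac{x}{2}\big)e^{-x(2t+x)/4}$ of $\LinExp(\tfrac t2,\tfrac12)$. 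Your route buys an elementary, self-contained, Stirling-free proof with an exact finite-$n$ distribution as a by-product; the analytic route buys estimates that transfer with no extra work to the variant the paper actually needs ($Y_{m_1,m_2}$, down-contacts at levels $-1,-2$, whose generating function differs only by a shift) and to moment asymptotics.

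One clarification on what you flag as the ``hard part'': it is not hard, because Scheff\'e's lemma requires neither tail domination nor uniformity in $x$. The functions $f_N(x)=\sqrt{m_1}\,\P\{W_{m_1,m_2}=\lfloor x\sqrt{m_1}\rfloor\}$ are genuine probability densities on $[0,\infty)$ (they integrate to $1$ by construction), and for each \emph{fixed} $x$ your expansion has cumulative error $O\big(x(t+x)^2/\sqrt{m_1}\big)\to 0$, so $f_N\to f$ pointwise a.e.; Scheff\'e then gives $L^1$ convergence, hence convergence in total variation of the laws with densities $f_N$. Since a random variable with density $f_N$ can be realized as $(W_{m_1,m_2}+U)/\sqrt{m_1}$ with $U$ uniform on $[0,1)$, which differs from $W_{m_1,m_2}/\sqrt{m_1}$ by at most $1/\sqrt{m_1}$ almost surely, the claimed weak convergence follows with no further uniform estimates. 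So your argument is complete as sketched.
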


\smallskip

Next we show that the random variables $W_{m_1,m_2}$ and $Y_{m_1,m_2}$ are asymptotically indistinguishable. We show that both generating functions are almost identical and lead to the same asymptotic behavior. We derive the generating function of the weighted Dyck paths of interest, counting the number of down-contacts at levels $-1$ and $-2$ . We follow the classical analysis of Banderier and Flajolet~\cite{BanderierFlajolet2002}, see also~\cite{FlaSed}.
Assume that $m_1 \ge m_2+2$, such that the endpoint has a negative $y$-coordinate: $m_2-m_1 \le -2 <0$. 
We can decompose these Dyck paths into three parts (see Figure~\ref{fig:arch_decomposition}): 
\begin{itemize}
	\item Part one. An excursion $\mathcal{E}$ of arbitrary length, starting at the origin and never going below the $x$-axis and eventually returning to $y=0$, 
then followed by a downward step with additional weight $q$ leading to the first contact with $y=-1$.
\item  Part two. The middle part $\mathcal{M}$ consists itself of two different subparts. The first part consists of excursions $\mathcal{E}_{\text{up}}(q)$ initially going upwards from $y=-1$ to the same level. Here, the last step is always a downward step to $y=-1$ and is weighted with $q$. Second, we consider excursions $\mathcal{E}_{\text{down}}(q)$ going downwards, with an additional weight $q$ at the first step at height $y=-2$. The middle part finishes with a final contact at $y=-1$, followed by a downward step to $y=-2$ with additional weight $q$, yielding the formal description (see~\cite{FlaSed} for basic combinatorial constructions and the symbolic method):
\[
\mathcal{M}=\text{\textsc{Seq}}(\mathcal{E}_{\text{up}}(q) \cup\mathcal{E}_{\text{down}}(q))\times \{q\}.
\] 
\item Part three. An excursion starting at $y=-2$ and ending at $y=m_2-m_1 \le -2$, never reaching $y=-1$.
\end{itemize}
 
\begin{figure}[!htb]
\includegraphics[scale=0.2]{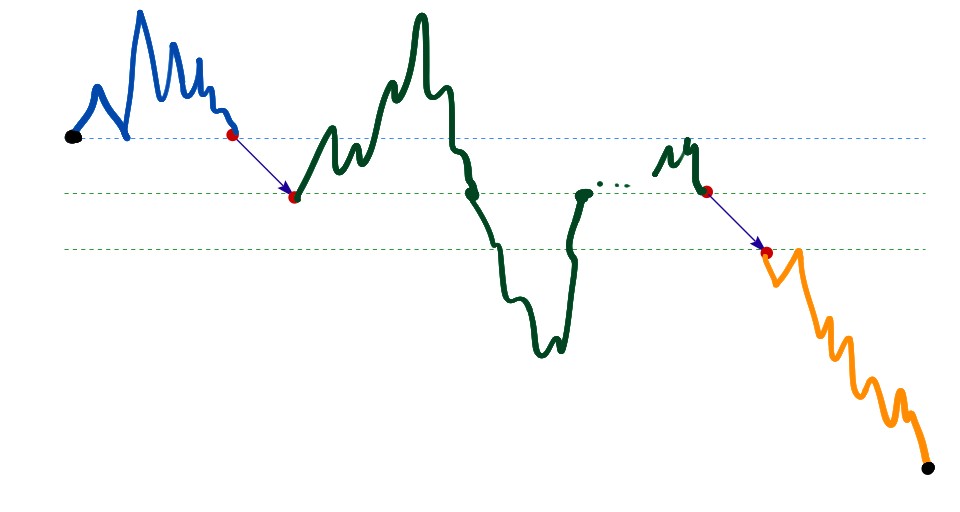}%
\caption{Visualization of the decomposition for $m_1\gg m_2$: the first part is the excursion (blue), 
the center part starts at $y=-1$ and ends at the same level (green), the final part departs from $y=-2$ to the ending position at
level $y=m_2-m_1\le -2$ (orange).}%
\label{fig:arch_decomposition}%
\end{figure}

By the classical arch decomposition, the generating function of excursions $E(z)$, 
obtained by using the sequence construction from the symbolic method in combinatorics~\cite{FlaSed}, 
satisfies
\[
E(z)=\frac{1}{1-z^2E(z)},\quad \text{which implies} \quad E(z)=\frac{1-\sqrt{1-4z^2}}{2z^2}.
\]
Thus, our searched function for part one equals $G_{1}(z) := E(z)\cdot zq$.
Furthermore, the generating function of excursions $\mathcal{E}_{\text{up}}(q)$ and $\mathcal{E}_{\text{down}}(q)$,
both with generating functions
\[
\frac{1}{1-z^2q E(z)} = 1 + \sum_{k \ge 1} \big(z^{2}qE(z)\big)^{k},
\]
are grouped together using the decomposition by arches corresponding to contacts
with $y=-1$, and taking into account the last additional downward step reaching $y=-2$ yields for part two:
\[
G_{2}(z) := M(z)=\frac{1}{1-2z^2qE(z)}\cdot zq.
\]
Finally, the generating function of the last excursion from $y=-2$ to $y=m_2-m_1$ is obtained 
using the classical generating function $F(z,u) = \sum_{\text{$p$ meander}} z^{\text{length of $p$}} u^{\text{final altitude of $p$}}$ of the final altitude of meanders, see Banderier and Flajolet~\cite{BanderierFlajolet2002}.
It is known that for Dyck paths it holds:
\[
F(z,u)=\frac{u-z E(z)}{u\big(1-z\big(u+\frac{1}u\big)\big)},
\]
where it is here assumed that the paths are starting at the origin and are never going below the $x$-axis. In our case, the paths
go from $y=-2$ to $y=m_2-m_1$, so by symmetry we need to extract the coefficient of $u^{m_1-m_2-2}$, thus we get for the third part $G_{3}(z) = [u^{m_{1}-m_{2}-2}] F(z,u)$.

Collecting all parts $G_{1}(z)$, $G_{2}(z)$ and $G_{3}(z)$, we obtain for the generating function 
\begin{equation*}
  G(z,q) = \sum_{\text{$p$ Dyck path}} z^{\text{length of $p$}} q^{\text{$\#$ downward visits at $y \in \{-1,-2\}$ of $p$}},
\end{equation*}
where the sum is running over all Dyck paths $p$ starting at the origin and ending at $y=m_{2}-m_{1} \le -2$, 
\begin{equation}
\label{eqn:u}
G(z,q)=[u^{m_1-m_2-2}]\frac{z^2q^2E(z)}{1-2z^2qE(z)}\cdot \frac{u-zE(z)}{u\big(1-z\big(u+\frac{1}u\big)\big)}.
\end{equation}
The extraction of coefficients can actually be carried out in an explicit manner, 
as the denominators factors nicely~\cite{BanderierFlajolet2002},
\[
u\big(1-z\big(u+\frac{1}u\big)\big)
=u-zu^2-z=-z(u-u_1(z))(u-u_2(z)),
\]
where
\begin{equation}\label{eqn:u1_u2}
u_1(z)=\frac{1-\sqrt{1-4z^2}}{2z}=zE(z),\quad u_2(z)=\frac{1+\sqrt{1-4z^2}}{2z},
\end{equation}
such that
\[
\frac{u-zE(z)}{u\big(1-z\big(u+\frac{1}u\big)\big)}
= \frac{1}{-z(u-u_2(z))}
=\frac{1}{z u_2(z)\big(1-\frac{u}{u_2(z)}\big)}
=\frac{u_1(z)}{z\big(1-u \cdot u_1(z)\big)},
\]
where we used that $u_1(z)u_2(z)=1$. This leads to the following result.
\begin{prop}
Assume that $m_1\ge m_2+2$. The generating function $G(z,q)$ of the number of Dyck paths of length $n=m_1+m_2$, starting at level $0$ and ending at level $m_2-m_1$, weighted according to downward visits at levels $y=-1$ and $y=-2$, is given by
\begin{equation}
\label{eqn:u2}
G(z,q) %= \frac{z q^{2} E(z)}{1-2z^2qE(z)} \cdot \big(z E(z)\big)^{m_1-m_2-1}.
= q^{2}\frac{1}{1-2z^2qE(z)} \cdot \big(z E(z)\big)^{m_1-m_2}.
\end{equation}
\end{prop}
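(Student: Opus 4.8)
The plan is to finish the coefficient extraction set up in~\eqref{eqn:u}. The first observation is that the product of parts one and two, which are independent of $u$, can be pulled out of the coefficient operator. Collecting them gives
\[
G_1(z)\cdot G_2(z) = zqE(z)\cdot\frac{zq}{1-2z^2qE(z)} = \frac{z^2q^2E(z)}{1-2z^2qE(z)},
\]
which is exactly the $u$-independent prefactor appearing in~\eqref{eqn:u}. Hence it suffices to extract $[u^{m_1-m_2-2}]$ from the remaining meander factor, which the excerpt has already reduced to the rational form $\frac{u_1(z)}{z(1-u\,u_1(z))}$ by using the kernel factorization and the identity $u_1(z)u_2(z)=1$.

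Next I would expand this rational factor as a geometric series in $u$, which is legitimate since $u$ enters only through the single simple pole at $u=1/u_1(z)$:
\[
\frac{u_1(z)}{z\big(1-u\,u_1(z)\big)} = \frac{1}{z}\sum_{k\ge 0} u^k\,u_1(z)^{k+1}.
\]
Reading off the coefficient of $u^{m_1-m_2-2}$, which is a genuine nonnegative exponent precisely because of the hypothesis $m_1\ge m_2+2$, selects the term $k=m_1-m_2-2$ and yields $\frac{1}{z}\,u_1(z)^{m_1-m_2-1}$.

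Finally I would substitute $u_1(z)=zE(z)$ from~\eqref{eqn:u1_u2} and recombine with the prefactor. After collecting the powers of $z$ and $E(z)$,
\[
\frac{z^2q^2E(z)}{1-2z^2qE(z)}\cdot\frac{1}{z}\big(zE(z)\big)^{m_1-m_2-1} = q^2\,\frac{1}{1-2z^2qE(z)}\,\big(zE(z)\big)^{m_1-m_2},
\]
which is exactly~\eqref{eqn:u2}. I do not expect any genuine obstacle at this stage: the analytically substantial input, namely the arch decomposition giving $E(z)$, the three-part decomposition of the weighted paths, and the meander generating function $F(z,u)$ together with its kernel factorization, has already been established, so the proposition reduces to this short, direct computation. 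The only place demanding care is the exponent bookkeeping: tracking the stray factor $\frac{1}{z}$ produced by the kernel simplification and verifying that the shift $m_1-m_2-2\mapsto m_1-m_2-1\mapsto m_1-m_2$ closes up correctly, so that the final power of $zE(z)$ is exactly $m_1-m_2$ and the leading weight is $q^2$.
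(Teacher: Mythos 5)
Your proposal is correct and follows exactly the route the paper takes: the paper's own justification consists of the three-part decomposition yielding \eqref{eqn:u}, the kernel factorization reducing the meander factor to $\frac{u_1(z)}{z(1-u\,u_1(z))}$, and then the coefficient extraction $[u^{m_1-m_2-2}]$ via the geometric series, which is precisely the computation you carry out (the paper compresses it into ``This leads to the following result''). Your exponent bookkeeping, including the role of the hypothesis $m_1\ge m_2+2$ and the cancellation producing the final power $\big(zE(z)\big)^{m_1-m_2}$ with prefactor $q^2$, matches the paper's statement.
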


\begin{remark}[Generalized composition schemes]
The structure of the generating function is similar to (extended) composition schemes
\[
F(z,q)=\Psi\big(q H(z)\big)\cdot M(z),
\]
considered in~\cite{BKW2024,BFSS2001,FlaSed}. The main novelty here is the dependance on the additional parameter $u$ in~\eqref{eqn:u}, or equivalently, the power $\big(z E(z)\big)^{m_1-m_2-1}$, where $m_1-m_2$ is allowed to depend on $n$, when analyzing $[z^n]F(z,q)$. 
This leads to new asymptotic regimes. A general study of such augmented schemes is forthcoming, together with the authors of~\cite{BKW2024}. 
\end{remark}

Next, we compare this generating function to the generating function of the number of zero-contacts starting at $y=0$ and ending at $y=m_2-m_1\neq 0$, where the walks have the length $n=m_1+m_2$. We can decompose this generating function
into two parts: the first part is a bridge, where the weight $q$ encodes the zero-contacts. Then, after a single step, we leave the $x$-axis and start our approach to the final position at $d=|m_2-m_1|$:
\begin{equation}\label{eqn:gf_zero_contacts}
\begin{split}
&[u^{|m_1-m_2|-1}]\frac{1}{1-2z^2qE(z)}\cdot z\cdot \frac{u-zE(z)}{u\big(1-z\big(u+\frac{1}u\big)\big)}\\
&\quad =\frac1z\cdot \frac{1}{1-2z^2qE(z)}\big(z E(z)\big)^{|m_1-m_2|+1}.
\end{split}
\end{equation}
We observe that both generating functions~\eqref{eqn:u} and \eqref{eqn:gf_zero_contacts} are nearly identical, except
a shift of length one in the length and in the difference $d=|m_1-m_2|$.
Symbolically, 
\[
Y_{m_1,m_2}\sim W_{m_1-1,m_2} +2,
\]
which readily leads to the stated limit law for $Y_{m_1,m_2}$. Note that one can give a more detailed analysis by extraction of coefficients, 
very similar to~\cite{PK2023}; we leave the details to the interested reader.

\smallskip

Now are are ready to state the main result, namely the limit law for $X_n$.

\begin{theorem}
\label{the:limitLaw}
The random variable $X_n/\sqrt{n}$, counting the normalized number of correct guesses
in a one-time riffle shuffle with no feedback card guessing game, tends for $n\to\infty$ to a sum of two independent identically distributed half-normal distributed random variables $H_1$, $H_2$, each one
with density function $f_{H}(x) = \frac{2}{\sqrt{\pi}}e^{-x^2}$, $x \ge 0$,
\[
\frac{X_n}{\sqrt{n}}\claw H_1 + H_2.
\]
Equivalently, the density $f(x):=f_{H_{1}+H_{2}}(x)$ of the limit law $H_{1} + H_{2}$, supported on $[0,\infty)$, is given in terms of the density function $\varphi(x)=\frac1{\sqrt{2\pi}}e^{-x^2/2}$ and the cumulative distribution function $\Phi(x)=\int_{-\infty}^x \varphi(t)dt$ of the standard normal distribution as follows:
\[
f(x)=4\varphi(x)\cdot\big(2\Phi(x)-1\big).
\]
\end{theorem}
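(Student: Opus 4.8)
The plan is to prove the result by a conditioning-and-mixing argument: I would derive the half-normal limit of each summand in Proposition~\ref{prop1} as a continuous Gaussian mixture of the linear-exponential laws from Proposition~\ref{prop:LinExp}, and then convolve. First I would reduce to a single summand. By Proposition~\ref{prop1} the two contributions $Y_{h-J_h,J_h}$ and $Y^{\ast}_{n-h-J^{\ast}_{n-h},J^{\ast}_{n-h}}$ are independent, and since $h\sim n-h\sim n/2$ they are of the same asymptotic type; hence it suffices to establish $Y_{h-J_h,J_h}/\sqrt{n}\claw H$ with $H$ half-normal, $f_H(x)=\frac{2}{\sqrt{\pi}}e^{-x^2}$, and then obtain $X_n/\sqrt{n}\claw H_1+H_2$ by independence.

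For the single summand I would condition on $J_h$. Writing $m_1=h-J_h$, $m_2=J_h$, the binomial approximation~\eqref{eqn:binomial_approximation} gives $J_h\sim \tfrac{h}{2}+\tfrac{\sqrt{h}}{2}\mathcal{N}$, so that on the event $\{J_h=j\}$ corresponding to a standardized value $t=(j-\tfrac{h}{2})/(\tfrac{\sqrt{h}}{2})$ we have $m_1\sim \tfrac{h}{2}\sim \tfrac{n}{4}$ and $m_2-m_1=2j-h\sim \sqrt{h}\,t\sim \sqrt{2}\,t\cdot\sqrt{m_1}$. Proposition~\ref{prop:LinExp}, in both its forms according to the sign of $t$, then yields the conditional limit $Y_{m_1,m_2}/\sqrt{m_1}\claw \LinExp(\tfrac{\sqrt{2}|t|}{2},\tfrac12)$; rescaling by the factor $\sqrt{m_1}/\sqrt{n}\to \tfrac12$ converts this into the conditional distribution function $z\mapsto 1-e^{-\sqrt{2}|t|z-z^2}$ for $z\ge0$. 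Averaging over the asymptotically standard-normal fluctuation $t$ of $J_h$ and evaluating the resulting Gaussian integral,
\[
\int_{-\infty}^{\infty}\bigl(1-e^{-\sqrt{2}|t|z-z^2}\bigr)\varphi(t)\,dt=1-2e^{-z^2}\int_0^{\infty}e^{-\sqrt{2}tz}\varphi(t)\,dt=2\Phi(\sqrt2\,z)-1,
\]
gives a limiting distribution function whose derivative $2\sqrt2\,\varphi(\sqrt2\,z)=\frac{2}{\sqrt\pi}e^{-z^2}$ is exactly $f_H$, identifying the half-normal limit.

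To finish, I would convolve the two independent half-normal limits. Computing $f_{H_1+H_2}(x)=\frac{4}{\pi}\int_0^x e^{-y^2-(x-y)^2}\,dy$ by completing the square, $-y^2-(x-y)^2=-2(y-\tfrac{x}{2})^2-\tfrac{x^2}{2}$, reduces the integral to an error function; using $\erf(x/\sqrt2)=2\Phi(x)-1$ together with $e^{-x^2/2}=\sqrt{2\pi}\,\varphi(x)$ then yields the stated density $f(x)=4\varphi(x)\bigl(2\Phi(x)-1\bigr)$.

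The main obstacle, and the step demanding the most care, is making the mixing rigorous: Proposition~\ref{prop:LinExp} provides the conditional limit only along deterministic sequences with a fixed ratio $t$, whereas here $t$ is itself random through $J_h$. To interchange the conditional limit with the average over $J_h$ I would invoke a local central limit theorem for $J_h$ (replacing $\P\{J_h=j\}$ by its Gaussian density times the spacing, so that the sum becomes a Riemann sum for the integral above), together with uniformity of the convergence in Proposition~\ref{prop:LinExp} for $t$ in compact sets and a dominating tail bound allowing dominated convergence. A secondary technical point is the neighborhood $t\approx0$, where $m_1\approx m_2$ and the hypothesis $t>0$ degenerates; here one uses that the linear-exponential law depends continuously on $t$ and collapses to the Rayleigh law $1-e^{-z^2}$ at $t=0$, so this region contributes negligibly and consistently.
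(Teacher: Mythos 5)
Your proposal is correct and follows essentially the same route as the paper's proof: decomposition into two independent summands via Proposition~\ref{prop1}, a Gaussian (de Moivre--Laplace) mixture of the linear-exponential limits from Proposition~\ref{prop:LinExp} to identify each summand as half-normal, and a final convolution yielding $f(x)=4\varphi(x)\big(2\Phi(x)-1\big)$. The only differences are cosmetic --- you integrate over all $t\in\R$ with $|t|$ and rescale by $\sqrt{n}$ before mixing, where the paper folds the integral to $[0,\infty)$ by symmetry and rescales afterwards --- and your closing remarks on uniformity, dominated convergence, and the $t\approx 0$ regime make explicit technical points that the paper passes over silently.
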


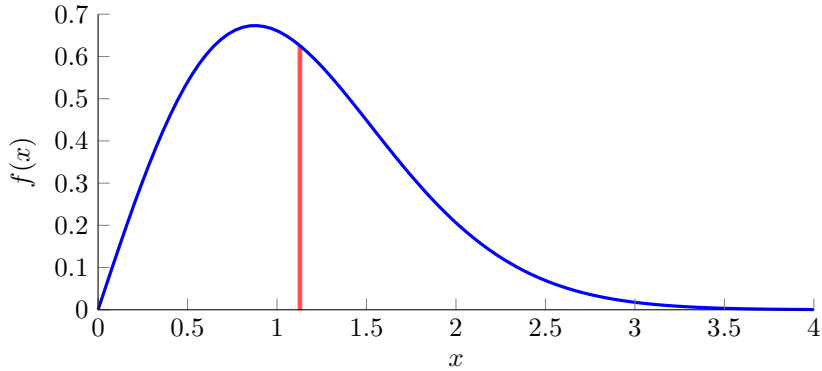
\begin{figure}[!htb]
\begin{tikzpicture}
\begin{axis}[
 domain=0:4,samples=100,%smooth,  enlargelimits=upper, 
  axis lines*=left,
  height=5.5cm, width=11cm,
  xtick={0,0.5, 1,1.5, 2,2.5, 3,3.5, 4}, 
	 ytick={0,0.1, 0.2, 0.3, 0.4, 0.5, 0.6, 0.7}, 
	ymin=0, ymax=0.7,
	xmin=0, xmax=4,
   axis on top,  clip=false, no markers,
	 xlabel={$x$},
   ylabel={$f(x)$},
  ]
%\addplot[very thick,cyan!50!black]{gauss(2,1)};
\addplot[red!70, fill=red!70, domain=1.118:1.138]{0.62548}\closedcycle; 
\addplot [very thick, blue] {(4/sqrt(2*pi))*exp(-x^2/2)*(2*normcdf(x)-1)};
	%\addplot[domain=0:2,very thick]{(x^2)/2+2)};
\end{axis}
\end{tikzpicture}
\caption{Plot of the density function $f(x)$ of the limit law.
The red vertical line marks the expected value $\frac{2}{\sqrt{\pi}}\approx 1.12838$ 
and the corresponding value of the density $f(\frac{2}{\sqrt{\pi}})\approx 0.62548$.}
\label{fig:LimitDensity}%
\end{figure}

The shape of the density matches very well the simulations in~\cite[Figure 3]{KSTY2023}, as well as our own simulations shown before.

\begin{proof}
Our starting point is the distributional equation of $X_{n}$ stated in Proposition~\ref{prop1}. 
In the following we give a derivation of the limit law for the first summand $Y_{h-J_h,J_h}$, the second one is analyzed in a similar way. 
Let $x>0$. We use the de Moivre-Laplace asymptotics of the binomial distribution and get, with $\mu_{J}=h/2$ and $\sigma_{J} = \sqrt{h}/2$ as denoted in \eqref{eqn:binomial_approximation},
\begin{align*}
F_h(x) & := \P\big\{Y_{h-J_h,J_h}\le x\sqrt{h/2}\big\} \\
& \sim \int_0^{h} \P\big\{Y_{h-j,j}\le x\sqrt{h/2}\big\} \exp\Big(-\frac{(j-\mu_J)^2}{2\sigma_J^2}\Big)\frac{1}{\sigma_J\sqrt{2\pi}}dj.
\end{align*}
%This implies that
%\begin{align*}
%F_h(x)\sim 
%\int_0^{h} \P\{Y_{h-j,j}\le x\sqrt{h/2}\} \exp\Big(-\frac{(j-\mu_J)^2}{2\sigma_J^2}\Big)\frac{1}{\sigma_J\sqrt{2\pi}}dj
%\end{align*}
By the asymptotics of the binomial random variable we further get by substituting $j=\mu_{J}+\sigma_{J} t$: 
\begin{align*}
F_h(x)\sim 2\int_{0}^{\infty} \P\big\{Y_{h/2-t\sqrt{h}/2,h/2+t\sqrt{h}/2}\le x\sqrt{h/2}\big\} \exp\Big(-\frac{t^2}{2}\Big)\frac{1}{\sqrt{2\pi}}dt.
\end{align*}
Furthermore, by our previous result in Proposition~\ref{prop:LinExp} we obtain
\[
\P\{Y_{h/2-t\sqrt{h}/2,h/2+t\sqrt{h}/2}\le x\sqrt{h/2}\}\to 1-e^{-\frac{x(2\sqrt{2}t +x)}{4}},
\]
as 
\[
t\sqrt{h}\sim t\sqrt{2}\cdot \sqrt{h/2}.
\]
This implies that
\[
F_h(x) \sim 1 - \frac{2}{\sqrt{2\pi}}\int_{0}^{\infty}e^{-\frac{x(2\sqrt{2}t +x)}{4}-t^2/2}dt 
= 1 - \frac{2}{\sqrt{2\pi}} \int_{0}^{\infty} e^{-\big(\frac{\sqrt{2}t+x}{2}\big)^{2}} dt.
\]
The integral is readily evaluated by substituting $\tau = (\sqrt{2}t+x)/2$ and we get
\[
F_h(x)\sim 1-\frac{2}{\sqrt{\pi}}\int_{x/2}^\infty e^{-\tau^2}d\tau.
\]
This implies that the arising density of the limit law of $Y_{h-J_{h},J_{h}}/\sqrt{h/2}$ is obtained by taking the derivative w.r.t.\ $x$ yielding
\[
\frac{e^{-x^2/4}}{\sqrt{\pi}}, \quad x \ge 0.
\]
Actually, this is the density of a half-normal distributed random variable $\text{HN}(c)$ with 
parameter $c=\sqrt{2}$. Finally, we note that 
\[
\sqrt{n}\sim \sqrt{4\cdot \frac{h}2}=2\sqrt{\frac{h}2},
\]
which implies that the limit law of $Y_{h-J_h,J_h}/\sqrt{n}$ has, due to scaling, the density 
\[
f_{H}(x)=\frac2{\sqrt{\pi}}e^{-x^2},\quad x\ge 0.
\]
Finally, the density of the limit law $H_{1}+H_{2}$ of $X_{n}/\sqrt{n}$ can be obtained using standard methods, where $\erf(x) = \frac{2}{\sqrt{\pi}}\int_{0}^{x} e^{-t^{2}} dt$ denotes the error function:
\begin{align*}
f_{H_1+H_2}(x)&=\int_0^{x}f_{H_1}(t)f_{H_2}(x-t)dt
=\frac{4}{\pi}\int_0^{x}e^{-t^2-(x-t)^2}dt
=\frac{4}{\pi}e^{-x^2}\int_0^{x}e^{-2t(t-x)}dt\\
& = \frac{4}{\pi} e^{-x^{2}/2} \int_{0}^{x} e^{-2(t-\frac{x}{2})^{2}} dt =\frac{2\sqrt{2}}{\sqrt{\pi}}e^{-x^2/2}\erf\big(\frac{x}{\sqrt{2}}\big)=4\varphi(x)\cdot\big(2\Phi(x)-1\big).
\end{align*}

\end{proof}

\section{Convergence of moments}
We start by collecting the moment sequence of the limit law. Then, we perform a sanity check by comparing the moments of the limit law 
$H_1 + H_2$ of $X_n/\sqrt{n}$ with the precise results for the first five moments with $n=4L$ tending to infinity, computed in~\cite{KSTY2023}. 
Afterwards, we extend the results of~\cite{KSTY2023} obtaining moment convergence for arbitrary high integer moments (for arbitrary $n\to\infty$).

\smallskip

\begin{lem}[Moments of the limit law] 
The sum  $H_1 + H_2$ of two independent identically distributed half-normal distributed random variables $H_1$, $H_2$, each one
with density function $f_{H}(x) = \frac{2}{\sqrt{\pi}}e^{-x^2}$, $x \ge 0$, has moment sequence
\[
\tilde{\mu}_{s} %= \E\Big(\big(H_1+H_2\big)^s\Big)% = \sum_{k=0}^{s}\binom{s}k \mu_k\mu_{s-k}
= \sum_{k=0}^{s}\binom{s}k \frac{\Gamma(\frac{k+1}2)\Gamma(\frac{s-k+1}2)}{\Gamma^2(\frac{1}2)},
\]
for integer $s\ge 1$.
\end{lem}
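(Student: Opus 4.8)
The plan is to reduce everything to a single one-dimensional Gamma integral. Since $H_1$ and $H_2$ are independent, the binomial theorem gives
\[
\tilde{\mu}_s = \E\big((H_1+H_2)^s\big) = \sum_{k=0}^{s} \binom{s}{k}\,\E(H_1^k)\,\E(H_2^{s-k}),
\]
so the whole statement follows once I know the individual integer moments of a single half-normal variable $H$ with density $f_H(x)=\frac{2}{\sqrt{\pi}}e^{-x^2}$ on $[0,\infty)$. Because $H_1$ and $H_2$ are identically distributed, it suffices to compute $\E(H^k)$ once and then reassemble the convolution.

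First I would compute $\E(H^k)$ for each $k\ge 0$. Writing $\E(H^k)=\frac{2}{\sqrt{\pi}}\int_0^\infty x^k e^{-x^2}\,dx$ and substituting $u=x^2$ (so $dx=\frac12 u^{-1/2}\,du$) turns this into a standard Gamma integral,
\[
\int_0^\infty x^k e^{-x^2}\,dx = \frac{1}{2}\int_0^\infty u^{(k-1)/2}\,e^{-u}\,du = \frac{1}{2}\,\Gamma\Big(\frac{k+1}{2}\Big).
\]
Using $\Gamma(\tfrac12)=\sqrt{\pi}$ to absorb the normalization constant, this yields the clean closed form $\E(H^k)=\Gamma(\frac{k+1}{2})/\Gamma(\frac12)$. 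As a sanity check, $k=0$ gives $\E(H^0)=1$, confirming that $f_H$ is correctly normalized, and $k=1$ gives $\E(H)=1/\sqrt{\pi}$, which matches $\E(X_n)/\sqrt{n}\to \frac{2}{\sqrt{\pi}}$ when the two summands are added.

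Finally I would substitute $\E(H_1^k)=\Gamma(\frac{k+1}{2})/\Gamma(\frac12)$ and $\E(H_2^{s-k})=\Gamma(\frac{s-k+1}{2})/\Gamma(\frac12)$ back into the binomial sum, which immediately produces the asserted formula with $\Gamma^2(\frac12)$ in the denominator. There is no genuine obstacle here: the argument is a direct computation. The only point requiring a little care is matching the normalization, i.e.\ recognizing that the density prefactor $\frac{2}{\sqrt{\pi}}$ is exactly $1/\big(\frac12\Gamma(\frac12)\big)$, so that the half-normal moments collapse to a pure ratio of Gamma values; once that is in place, everything else is just the binomial expansion of a sum of independent summands.
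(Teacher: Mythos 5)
Your proposal is correct and follows essentially the same route as the paper: expand $(H_1+H_2)^s$ by the binomial theorem, use independence to factor the mixed moments, and plug in the half-normal moments $\E(H^k)=\Gamma\big(\frac{k+1}{2}\big)/\Gamma\big(\frac12\big)$. The only difference is that you derive this moment formula explicitly via the substitution $u=x^2$ in the Gamma integral, while the paper simply recalls it as a known property of the half-normal distribution.
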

\begin{proof}
We recall basic properties of the half-normal distribution $H$ with density
$f_{H}(x) = \frac{2}{\sqrt{\pi}}e^{-x^2}$. Its integer moments are given by
\[
\mu_s=\E(H^s) = \frac{\Gamma(\frac{s+1}2)}{\Gamma(\frac12)} = \frac{\Gamma(\frac{s+1}2)}{\sqrt{\pi}},
\]
with $s\ge 0$. Consequently, the $s$-th integer moments $\tilde{\mu}_{s}$ of $H_1 + H_2$ are given by
\begin{equation}\label{eqn:moments_H1+H2}
\tilde{\mu}_{s} = \E\Big(\big(H_1+H_2\big)^s\Big) = \sum_{k=0}^{s}\binom{s}k \mu_k\mu_{s-k}
= \sum_{k=0}^{s}\binom{s}k \frac{\Gamma(\frac{k+1}2)\Gamma(\frac{s-k+1}2)}{\Gamma^2(\frac{1}2)}. 
\end{equation}
\end{proof}

\begin{example}[Asymptotics of the first five moments~\cite{KSTY2023}]
The first five moments of the half normal distributed random variable are given by
\[
\mu_1=\mu_{3}=\frac{1}{\sqrt{\pi}},\ 
\mu_2=\frac12,\ \mu_4=\frac34,\
\mu_5=\frac{2}{\sqrt{\pi}}.
\]
This implies that the first five moments of the limit law are given by
\begin{equation}
\label{eqn:mom1}
\tilde{\mu}_{1}=\frac{2}{\sqrt{\pi}},\ 
\tilde{\mu}_{2}=\frac{2}{\sqrt{\pi}}+1,\ \tilde{\mu}_{3}=\frac5{\sqrt{\pi}},\
\tilde{\mu}_{4}=\frac8\pi +3,\ \tilde{\mu}_{5}=\frac{43}{2\sqrt{\pi}}.
\end{equation}
We check with the asymptotic expansions of the moments
$\E(X_n^s)$, $1\le s\le 5$ for $n=4L$, and summarize here for the reader's convenience the dominant terms of the first five integer moments of $X_{n}$, with $n=4L$, as stated in~\cite{KSTY2023}:
\begin{align*}
\E(X_n)&\sim \frac{4L}{4^L}\binom{2L}{L},\\
\E(X_n^2)&\sim \frac{(4L)^2}{2 \cdot 4^{2L}}\binom{2L}{L}^2+4L,\\
\E(X_n^3)&\sim \frac{40L^2}{ 4^{L}}\binom{2L}{L},\\
\E(X_n^4)&\sim \frac{256L^3}{2\cdot 4^{2L}}\binom{2L}{L}^2+48L^2,\\
\E(X_n^5)&\sim \frac{688L^3}{4^{L}}\binom{2L}{L}.
\end{align*}
By the standard asymptotics 
\begin{equation}
\label{eqn:mom2}
\frac{2L}{4^L}\binom{2L}{L}\sim \frac{\sqrt{n}}{\sqrt{\pi}}
\end{equation}
and our results for the first five moments~\eqref{eqn:mom1}, we observe that indeed, for $n=4L$, it holds
\[
\E\big(\big(X_n/\sqrt{n}\big)^s\big)\to \tilde{\mu}_{s},\quad 1\le s\le 5.
\]
\end{example}

\smallskip

The remaining part of the section is devoted to prove that actually all integer moments of $X_{n}/\sqrt{n}$ converge to the moments of $H_{1}+H_{2}$. Again we deal with the generating functions description given in Lemma~\ref{lem:no1}, but we proceed in treating the occurring recurrence for $g_{m_{1},m_{2}}(q)$ in a direct way by using the so-called kernel method, see~\cite{BanderierFlajolet2002,Prodinger2004}. To this aim we first rewrite \eqref{eqn:no1} by setting $M:=m_{1}+m_{2}$ and $d:=m_{2}-m_{1}$, and introducing
\begin{equation*}
  \tilde{g}_{M,d}(q) = g_{m_{1},m_{2}}(q)=g_{(M-d)/2,(M+d)/2}(q).
\end{equation*}
This yields the recurrence
\begin{align}
  & \tilde{g}_{M,d}(q) = q^{\delta(d,-1)+\delta(d,-2)} \cdot \tilde{g}_{M-1,d+1} + \tilde{g}_{M-1,d-1}, \quad \text{for $M \ge 1$ and $|d| \le M$},\notag\\
	& \tilde{g}_{0,0}(q) =1, \qquad \text{with} \quad \tilde{g}_{M,d}=0, \quad \text{for $M<0$ or $|d|>M$}.
	\label{eqn:rec_gMd}
\end{align}
Let us introduce the trivariate generating function
\begin{equation}
  \tilde{G}(z,u,q) = \sum_{M \ge 0} \sum_{d=-M}^{M} \tilde{g}_{M,d}(q) z^{M} u^{d} = \sum_{m_{1},m_{2} \ge 0} g_{m_{1},m_{2}}(q) z^{m_{1}+m_{2}} u^{m_{2}-m_{1}},
\end{equation}
for which we get an explicit solution.
\begin{prop}\label{prop:formula_Gzuq}
  The generating function $\tilde{G}(z,u,q)$ of the sequence $\tilde{g}_{M,d}(q)$ (and thus also $g_{m_{1},m_{2}}(q)$, resp.) satisfying recurrence~\eqref{eqn:rec_gMd} (and \eqref{eqn:no1}, resp.) is given by
	\begin{equation}
	  \tilde{G}(z,u,q) = \frac{\big(2qu^{2}z+(q-1)^{2}u-q(q-1)z\big)P(z^{2})-zu(u+q(q-1)z)}{zu(zu^{2}-u+z)(1-2qP(z^{2}))},
	\end{equation}
	where $P(t) = \frac{1-\sqrt{1-4t}}{2} = \sum_{n \ge 1} \frac{1}{n} \binom{2n-2}{n-1} t^{n}$ denotes the generating function of shifted Catalan-numbers.
\end{prop}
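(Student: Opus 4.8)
The plan is to convert recurrence~\eqref{eqn:rec_gMd} into a single linear functional equation for $\tilde{G}(z,u,q)$ and then resolve it by a kernel-type argument. Multiplying~\eqref{eqn:rec_gMd} by $z^{M}u^{d}$ and summing over $M\ge1$, $|d|\le M$, the two unweighted transitions $d\mapsto d\pm1$ contribute the factors $zu$ and $zu^{-1}$, while the extra weight $q$, which is active only at $d=-1$ and $d=-2$, is isolated as a correction carried by the two \emph{section series} $G_{0}(z):=[u^{0}]\tilde{G}=\sum_{M}\tilde{g}_{M,0}z^{M}$ and $G_{-1}(z):=[u^{-1}]\tilde{G}=\sum_{M}\tilde{g}_{M,-1}z^{M}$. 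This yields
\[
\tilde{G}(z,u,q)\bigl(1-z(u+u^{-1})\bigr)=1+(q-1)z\,u^{-1}G_{0}(z)+(q-1)z\,u^{-2}G_{-1}(z),
\]
so that $\tilde{G}=N(u)/K$ with kernel $K=1-z(u+u^{-1})$, whose polynomial form $zu^{2}-u+z$ has exactly the roots $u_{1}(z),u_{2}(z)$ of~\eqref{eqn:u1_u2}, with $u_{1}u_{2}=1$ and $u_{1}=zE(z)=P(z^{2})/z$.

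The point to flag at once is that the altitude $d$ ranges over the \emph{whole} interval $[-M,M]$, so this is a walk on the full line and $\tilde{G}$ (already $K^{-1}$ itself) is genuinely singular at both kernel roots; the customary ``cancel the small-root pole'' step is therefore unavailable, and substituting a single root gives no valid relation. Instead I would record the Laurent expansion of $K^{-1}$ in the annulus $|u_{1}|<|u|<|u_{2}|$, namely $[u^{d}]\,K^{-1}=u_{1}^{|d|}/\sqrt{1-4z^{2}}$, and rewrite it through $u_{1}=P(z^{2})/z$ and $\sqrt{1-4z^{2}}=1-2P(z^{2})$, giving $[u^{-j}]K^{-1}=P(z^{2})^{j}/\bigl(z^{j}(1-2P(z^{2}))\bigr)$.

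The two unknown series are then pinned down by \emph{self-consistency}: extracting $[u^{0}]$ and $[u^{-1}]$ from $\tilde{G}=N(u)/K$ via $[u^{j}](u^{-i}K^{-1})=[u^{j+i}]K^{-1}$ produces two linear equations. Writing $P=P(z^{2})$ and clearing $1-2P$, they read
\[
\bigl(1-(q+1)P\bigr)G_{0}-(q-1)\tfrac{P^{2}}{z}\,G_{-1}=1,\qquad -(q-1)z\,G_{0}+\bigl(1-(q+1)P\bigr)G_{-1}=\tfrac{P}{z}.
\]
Their determinant factors as $\bigl(1-(q+1)P\bigr)^{2}-(q-1)^{2}P^{2}=(1-2qP)(1-2P)$, and after solving and simplifying with the defining relation $P^{2}=P-z^{2}$ of the (shifted) Catalan series the spurious factor $1-2P$ cancels, leaving
\[
G_{0}=\frac{qz^{2}-(q-1)P}{z^{2}(1-2qP)},\qquad G_{-1}=\frac{qP}{z(1-2qP)}.
\]
Substituting these back into $\tilde{G}=\bigl(u^{2}+(q-1)zu\,G_{0}+(q-1)z\,G_{-1}\bigr)\big/\bigl(-u(zu^{2}-u+z)\bigr)$, obtained by multiplying the functional equation by $u^{2}$ and factoring the kernel, and simplifying once more via $P^{2}=P-z^{2}$, yields the asserted closed form.

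The main obstacle is precisely the full-line nature just noted: because the walk is unconstrained, the textbook single-small-root cancellation does not apply, and the two boundary series must instead be determined by the pair of self-consistency relations built from the explicit annular expansion of $K^{-1}$. Keeping the positive- and negative-power expansions straight in the annulus, and recognising that every apparent $1-2P$ denominator collapses under the quadratic identity $P^{2}=P-z^{2}$, is where the real care lies; the final back-substitution is then routine algebra.
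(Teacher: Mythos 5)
Your proof is correct, and I checked the computations: the functional equation $\tilde{G}(z,u,q)\bigl(1-z(u+u^{-1})\bigr)=1+(q-1)zu^{-1}G_{0}+(q-1)zu^{-2}G_{-1}$ follows from \eqref{eqn:rec_gMd}, both self-consistency relations are right, the determinant factorization $\bigl(1-(q+1)P\bigr)^{2}-(q-1)^{2}P^{2}=(1-2qP)(1-2P)$ holds, your $G_{0}$ and $G_{-1}$ agree with the paper's intermediate solutions \eqref{eqn:sol_G0zq_G1zq}, and the back-substitution does reproduce the stated closed form. Where you genuinely diverge from the paper is in how the $2\times2$ linear system for the two unknown sections is produced. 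The paper also runs a kernel argument on $K(z,u)=zu^{2}-u+z$, but it first splits the bilateral series into two unilateral ones, $A(z,u,q)=\sum_{d\ge0}\tilde{G}_{-d}(z,q)u^{d}$ and $B(z,u,q)=\sum_{d\ge0}\tilde{G}_{d}(z,q)u^{d}$, each satisfying its own kernel equation involving $\tilde{G}_{0}$ and $\tilde{G}_{-1}$; since $A$ and $B$ are genuine power series in $u$, substituting the small root $u_{1}(z)$ into each is legitimate and annihilates the kernel, yielding exactly your system. So the small-root substitution you declare ``unavailable'' is in fact recovered after splitting: your objection is correct for the bilateral series $\tilde{G}$ itself, but it is not an obstruction to the standard method, only a reason to decompose first. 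Your alternative---inverting the kernel explicitly via the full-line (annular) expansion $[u^{-j}]K^{-1}=P(z^{2})^{j}/\bigl(z^{j}(1-2P(z^{2}))\bigr)$ and extracting $[u^{0}]$ and $[u^{-1}]$ as consistency conditions---avoids the $A$/$B$ bookkeeping and the final recombination $\tilde{G}=A(z,u^{-1},q)+B(z,u,q)-\tilde{G}_{0}$, at the price of invoking the classical closed form for unconstrained-walk coefficients, which you should justify (formally, $K^{-1}=\sum_{M\ge0}z^{M}(u+u^{-1})^{M}$ exists in $\mathbb{C}[u,u^{-1}][[z]]$ and its $u$-coefficients are central binomial sums); the paper's version needs only $K(z,u_{1})=0$. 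After that point the two arguments coincide: same linear system, same solutions, same back-substitution algebra.
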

\begin{proof}
  Introducing the auxiliary functions $\tilde{G}_{d}(z,q) = \sum_{M \ge 0} \tilde{g}_{M,d}(q) z^{M}$, for $d \in \mathbb{Z}$, we immediately obtain from \eqref{eqn:rec_gMd} the system of equations
	\begin{align}
	  \tilde{G}_{d}(z,q) & = z \tilde{G}_{d+1}(z,q) + z \tilde{G}_{d-1}(z,q), \quad \text{$d \ge 1$ or $d \le -3$},\notag\\
		\tilde{G}_{0}(z,q) & = z \tilde{G}_{1}(z,q) + z \tilde{G}_{-1}(z,q) + 1,\label{eqn:system_Gdzq}\\
		\tilde{G}_{d}(z,q) & = qz \tilde{G}_{d+1}(z,q) + z \tilde{G}_{d-1}(z,q), \quad \text{$d =-1$ or $d =-2$}.\notag
	\end{align}
	In order to treat \eqref{eqn:system_Gdzq} by means of the kernel method, we introduce the pair of functions
	\begin{equation*}
	  A(z,u,q) = \sum_{d \ge 0} \tilde{G}_{-d}(z,q) u^{d}, \qquad B(z,u,q) = \sum_{d \ge 0} \tilde{G}_{d}(z,q) u^{d}.
	\end{equation*}
	It is straightforward to get from \eqref{eqn:system_Gdzq} the following pair of equations for $A(z,u,q)$ and $B(z,u,q)$, which involve the unknown functions $\tilde{G}_{0}(z,q)$ and $\tilde{G}_{-1}(z,q)$:
	\begin{equation}\label{eqn:system_Azuq_Bzuq}
	\begin{split}
	  & (zu^{2}-u+z) A(z,u,q)\\
		& \qquad \qquad \mbox{} + ((q-1)zu^{2}+u-z)\tilde{G}_{0}(z,q) +((q-1)zu^{2}-zu)\tilde{G}_{-1}(z,q) = 0,\\
		& (zu^{2}-u+z) B(z,u,q) -z\tilde{G}_{0}(z,q) +zu\tilde{G}_{-1}(z,q)+u = 0.
	\end{split}
	\end{equation}
	The roots $u_{1}=u_{1}(z)$, $u_{2}=u_{2}(z)$ satisfying $K(z,u) := zu^{2}-u+z = z(u-u_{1})(u-u_{2}) = 0$, i.e., vanishing the kernel $K(z,u)$, are the ones stated in \eqref{eqn:u1_u2}. Plugging the root $u_{1}$, which admits a power series expansion around $z=0$, for $u$ into \eqref{eqn:system_Azuq_Bzuq}, the kernel is annihilated leading to a linear system of equations for $\tilde{G}_{0}(z,q)$ and $\tilde{G}_{-1}(z,q)$, whose solution can be written in the following way (note that $P(z^{2})=z u_{1}(z)$):
\begin{equation}\label{eqn:sol_G0zq_G1zq}
  \tilde{G}_{0}(z,q) = \frac{(1-q)P(z^{2})+qz^{2}}{z^{2}(1-2qP(z^{2}))}, \qquad \tilde{G}_{-1}(z) = \frac{qP(z^{2})}{z(1-2qP(z^{2}))}.
\end{equation}	
According to \eqref{eqn:system_Azuq_Bzuq} and \eqref{eqn:sol_G0zq_G1zq}, we thus also get explicit solutions for the auxiliary trivariate g.f.:
\begin{align*}
  A(z,u,q) & = \frac{(z-u-(q-1)zu^{2})\tilde{G}_{0}(z,q)+(zu-(q-1)zu^{3})\tilde{G}_{-1}(z,q)}{zu^{2}-u+z},\\
	B(z,u,q) & = \frac{z\tilde{G}_{0}(z,q)-zu\tilde{G}_{-1}(z,q)-u}{zu^{2}-u+z}.
\end{align*}
Finally, according to the definition, we have
\begin{equation*}
  \tilde{G}(z,u,q) = A(z,u^{-1},q) + B(z,u,q) - \tilde{G}_{0}(z,q),
\end{equation*}
which, after simple manipulations, yields the stated result.
\end{proof}
Bearing in mind the representation of $f_{n}(q)$ given in Lemma~\ref{lem:no1}, we will set $u=1$, i.e., considering
\begin{equation}\label{eqn:Def_TildeGzq}
  \tilde{G}(z,q) := \tilde{G}(z,1,q) = \sum_{M \ge 0} \sum_{d=-M}^{M} \tilde{g}_{M,d}(q) z^{M} = \sum_{M \ge 0} \sum_{m=0}^{M} g_{m,M-m}(q) z^{M}.
\end{equation}
Namely, when setting $R(q) = 4q^{4}-2(q^{2}+q^{3})$, this yields
\begin{equation}\label{eqn:rewritten_fnq}
  f_{n}(q) = R(q) + [z^{h}] \tilde{G}(z,q) \cdot [z^{n-h}] \tilde{G}(z,q).
\end{equation}
As we are interested in the moments of $X_{n}$, we will set $q=1+w$ in \eqref{eqn:rewritten_fnq} and carry out a series expansion around $w=0$. According to the definition of $f_{n}(q)$, the coefficients in the corresponding expansion involve the factorial moments of $X_{n}$,
\begin{equation*}
  f_{n}(1+w) = \sum_{s \ge 0} \frac{2^{n} \mathbb{E}(X_{n}^{\underline{s}})}{s!} w^{s}.
\end{equation*}
When denoting the respective expansions of the remaining functions via
\begin{equation*}
  R(1+w) = \sum_{0 \le s \le 4} r_{s} w^{s}, \qquad \tilde{G}(z,1+w) = \sum_{s \ge 0} \tilde{g}_{s}(z) w^{s},
\end{equation*}
we thus get from \eqref{eqn:rewritten_fnq}, by extracting the coefficient of $w^{s}$, the useful representation:
\begin{equation}\label{eqn:representation_moments_Xn}
  \mathbb{E}(X_{n}^{\underline{s}}) = \frac{s! \, r_{s}}{2^{n}} + \sum_{k=0}^{s} \frac{s!}{2^{n}} \sum_{k=0}^{s} \big([z^{h}] \tilde{g}_{k}(z)\big) \cdot \big([z^{n-h}] \tilde{g}_{s-k}(z)\big).
\end{equation}
Of course, to make use of it, we require suitable expansions of $\tilde{G}(z,1+w)$. First we use Proposition~\ref{prop:formula_Gzuq} and set $u=1$ to get the explicit formula (with $P(t)$ defined above):
\begin{equation}\label{eqn:formula_Gzq}
  \tilde{G}(z,q) = \frac{1}{1-2z} + \frac{(q-1)\big(qz^{2}+(1-q+qz)P(z^{2})\big)}{z(1-2z)(1-2qP(z^{2}))}.
\end{equation}
Next we consider the coefficients in the series expansion of $\tilde{G}(z,1+w)$ around $w=0$, i.e., the functions $\tilde{g}_{s}(z) = [w^{s}] \tilde{G}(z,1+w)$, determine the dominant singularities and provide local expansions around the singularity yielding the main asymptotic contributions.
\begin{lem}\label{lem:TildeGzq_Expansion}
  Let $\tilde{G}(z,q)$ as defined in \eqref{eqn:Def_TildeGzq}. The functions $\tilde{g}_{s}(z) = [w^{s}] \tilde{G}(z,1+w)$ obtained as coefficients in a series expansion of $\tilde{G}(z,1+w)$ around $w=0$ have radius of convergence $\frac{1}{2}$ and, for $s \ge 1$, have the two dominant singularities $\rho_{1,2} = \pm \frac{1}{2}$. Moreover, the local behaviour of $\tilde{g}_{s}(z)$ around $\rho := \rho_{1} = \frac{1}{2}$ is given as follows,
\begin{equation*}
  \tilde{g}_{s}(z) = \frac{1}{2^{\frac{s}{2}} \, (1-2z)^{\frac{s}{2}+1}} \cdot \big(1+\mathcal{O}(\sqrt{1-2z})\big), \quad s \ge 0.
\end{equation*}
\end{lem}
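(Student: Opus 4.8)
The plan is to work directly from the explicit formula \eqref{eqn:formula_Gzq} for $\tilde{G}(z,q)$, substitute $q=1+w$, and extract the coefficient of $w^{s}$ in closed form before analyzing its singularities. The key simplification is that, writing $\sigma=\sigma(z):=\sqrt{1-4z^{2}}=1-2P(z^{2})$, the only $q$-dependent denominator becomes
\[
1-2qP(z^{2}) = \sigma - 2wP(z^{2}) = \sigma - w(1-\sigma),
\]
since $2P(z^{2})=1-\sigma$. First I would rewrite the numerator of the second summand of \eqref{eqn:formula_Gzq} at $q=1+w$ as $w\,N(z,w)$, where $N(z,w)=N_{0}(z)+wN_{1}(z)$ is affine in $w$ with $N_{0}(z)=z^{2}+z\tfrac{1-\sigma}{2}$ and $N_{1}(z)=z^{2}-(1-z)\tfrac{1-\sigma}{2}$, so that
\[
\tilde{G}(z,1+w) = \frac{1}{1-2z} + \frac{w\,(N_{0}(z)+wN_{1}(z))}{z(1-2z)\,(\sigma-w(1-\sigma))}.
\]

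Next I would expand the remaining denominator geometrically, $\tfrac{1}{\sigma-w(1-\sigma)}=\sum_{k\ge0}\tfrac{(1-\sigma)^{k}}{\sigma^{k+1}}w^{k}$, and collect the coefficient of $w^{s}$. Since the $\tfrac{1}{1-2z}$ term only contributes at $s=0$ (yielding $\tilde{g}_{0}(z)=\tfrac{1}{1-2z}$, which already matches the claim), for $s\ge1$ this produces the exact closed form
\[
\tilde{g}_{s}(z) = \frac{N_{0}(z)\,(1-\sigma)^{s-1}}{z(1-2z)\,\sigma^{s}} + \frac{N_{1}(z)\,(1-\sigma)^{s-2}}{z(1-2z)\,\sigma^{s-1}},
\]
where the second summand is present only for $s\ge2$. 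This representation settles the global statements at once: $\tilde{g}_{s}(z)$ is a rational function of $z$ and $\sigma=\sqrt{1-4z^{2}}$ (with no spurious pole at $z=0$, since $N_{0},N_{1},P(z^{2})$ are regular there and the apparent $1/z$ cancels), so its only singularities are the branch points $z=\pm\tfrac12$ of $\sigma$ together with the pole of $\tfrac{1}{1-2z}$. Hence the radius of convergence is $\tfrac12$, and checking that neither point is removable for $s\ge1$ gives the two dominant singularities $\rho_{1,2}=\pm\tfrac12$.

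For the local behaviour at $\rho=\tfrac12$ I would insert the half-integer expansions as $z\to\tfrac12$: with $S:=\sqrt{1-2z}$ one has $\sigma=\sqrt{2}\,S\,(1+\mathcal{O}(S^{2}))$, $1-\sigma=1+\mathcal{O}(S)$, $N_{0}(z)=\tfrac12\,(1+\mathcal{O}(S))$ and $\tfrac{1}{z(1-2z)}=\tfrac{2}{S^{2}}\,(1+\mathcal{O}(S^{2}))$. The first summand then behaves like $\tfrac{2}{S^{2}}\cdot\tfrac12\cdot\tfrac{1}{2^{s/2}S^{s}}\,(1+\mathcal{O}(S))=\tfrac{1}{2^{s/2}(1-2z)^{s/2+1}}\,(1+\mathcal{O}(\sqrt{1-2z}))$, the asserted leading term; the $(1-\sigma)^{s-1}$ factor only contributes to the $\mathcal{O}(S)$ error. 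The second summand carries one lower power of $1/\sigma$, hence is smaller by a factor $\sigma=\mathcal{O}(\sqrt{1-2z})$ and is absorbed into the error, while the $s=1$ case (where it is absent) and the $s=0$ case are consistent with the same formula.

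The main obstacle is the uniform error control. The delicate point is to verify that every correction — from the subleading summand, from the higher-order terms of the square-root expansion of $\sigma$, and from $N_{0}$, $1-\sigma$ and $\tfrac{1}{z(1-2z)}$ — contributes only at relative order $\mathcal{O}(\sqrt{1-2z})$, so that they combine into the single factor $(1+\mathcal{O}(\sqrt{1-2z}))$. This hinges on $\sigma$ admitting a clean expansion in powers of $\sqrt{1-2z}$ and on the numerator $N_{0}$ being bounded away from $0$ at $z=\tfrac12$ (indeed $N_{0}(\tfrac12)=\tfrac12$); by contrast $N_{0}(-\tfrac12)=0$, which is precisely why the singularity at $z=-\tfrac12$ is of strictly lower order and does not affect the leading constant, consistent with the lemma recording the expansion only at $\rho=\tfrac12$.
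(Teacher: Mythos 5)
Your proposal is correct and follows essentially the same route as the paper: substitute $q=1+w$ into the explicit formula \eqref{eqn:formula_Gzq}, expand the $q$-dependent denominator geometrically to get closed forms for $\tilde{g}_{s}(z)$, read off the singularities from $1-2P(z^{2})=\sqrt{1-4z^{2}}$, and perform the local expansion at $z=\tfrac12$. The only cosmetic difference is that the paper merges your two summands into a single term via the identity $(P(z^{2}))^{2}=P(z^{2})-z^{2}$, obtaining the numerator $(z+1)P(z^{2})-z^{2}$, whereas you keep them separate and absorb the second one into the $\mathcal{O}(\sqrt{1-2z})$ error --- both are equivalent and equally rigorous.
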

\begin{remark}\label{rem:Second_Singularity}
  It is not difficult to show that the second dominant singularity $\rho_{2} = -\frac{1}{2}$ occurring in the functions $\tilde{g}_{s}(z)$ of Lemma~\ref{lem:TildeGzq_Expansion} leads to contributions that do not affect the main terms stemming from the contributions of the singularity $\rho = \rho_{1} = \frac{1}{2}$. Since we are here only interested in the main term contribution, we will restrict ourselves to elaborate the expansion around $\rho$.
\end{remark}
\begin{proof}
The explicit formula for $\tilde{G}(z,q)$ given in \eqref{eqn:formula_Gzq} can be rewritten as
\begin{equation*}
  \tilde{G}(z,q) = \frac{1}{1-2z} + \frac{(q-1)\big(z^{2}+zP(z^{2}) + (q-1)(z^{2}+(z-1)P(z^{2}))\big)}{z(1-2z)(1-2P(z^{2}))\big(1-\frac{2(q-1)P(z^{2})}{1-2P(z^{2})}\big)}.
\end{equation*}
Setting $q=1+w$, we obtain the series expansion
\begin{align*}
  & \tilde{G}(z,1+w) = \frac{1}{1-2z} + \frac{w\big(z^{2}+zP(z^{2}) + w(z^{2}+(z-1)P(z^{2}))\big)}{z(1-2z)(1-2P(z^{2}))\big(1-\frac{2wP(z^{2})}{1-2P(z^{2})}\big)}\\
	&  \quad = \frac{1}{1-2z} + w \frac{P(z^{2})+z}{(1-2z)(1-2P(z^{2}))}\\
	& \qquad \mbox{} + \sum_{s \ge 2} w^{s} \frac{(z+P(z^{2}))(2P(z^{2}))^{s-1}}{(1-2z)(1-2P(z^{2}))^{s}}
	+ \sum_{s \ge 2} w^{s} \frac{(z^{2}+(z-1)P(z^{2}))(2P(z^{2}))^{s-2}}{z(1-2z)(1-2P(z^{2}))^{s-1}}\\
	& = \frac{1}{1-2z} + w \frac{P(z^{2})+z}{(1-2z)(1-2P(z^{2}))}
	+ \sum_{s \ge 2} w^{s} \frac{(2P(z^{2}))^{s-2} ((z+1)P(z^{2})-z^{2})}{z(1-2z)(1-2P(z^{2}))^{s}},
\end{align*}
where we used in the last step the relation $(P(z^{2}))^{2} = P(z^{2}) - z^{2}$.
Thus, the functions $\tilde{g}_{s}(z)$ are given as follows:
\begin{gather}
  \tilde{g}_{0}(z) = \frac{1}{1-2z}, \qquad \tilde{g}_{1}(z) = \frac{P(z^{2})+z}{(1-2z)(1-2P(z^{2}))},\notag\\
	\tilde{g}_{s}(z) = \frac{\big((z+1)P(z^{2})-z^{2}\big) (2P(z^{2}))^{s-2}}{z(1-2z)(1-2P(z^{2}))^{s}}, \quad s \ge 2.\label{eqn:formulae_gsz}
\end{gather}
Since $1-2P(z^{2}) = \sqrt{1-4z^{2}} = \sqrt{1-2z} \cdot \sqrt{1+2z}$, it is immediate from the explicit formul\ae, that the functions $\tilde{g}_{s}(z)$ have radius of convergence $\frac{1}{2}$ with dominant singularities $\rho_{1}=\frac{1}{2}$ and, for $s \ge 1$, $\rho_{2}=-\frac{1}{2}$. To describe the local behaviour of $\tilde{g}_{s}(z)$ around $\rho:=\rho_{1}$ we use the notation $\mathcal{Z} = \frac{1}{1-2z}$ and $\tilde{\mathcal{Z}} = \frac{1}{1-4z^{2}} = \frac{1}{(1-2z)(1+2z)}$. We collect a few local expansions around $\rho$ used thereafter:
\begin{gather*}
  z = \frac{1}{2} \cdot \big(1+\mathcal{O}(\mathcal{Z}^{-1})\big), \qquad \tilde{Z} = \frac{\mathcal{Z}}{2} \cdot \big(1+\mathcal{O}(\mathcal{Z}^{-1})\big),\\
	P(z^{2}) = \frac{1}{2} \cdot \big(1+\mathcal{O}(\mathcal{Z}^{-\frac{1}{2}})\big), \qquad 
	\frac{1}{1-2P(z^{2})} = \frac{\mathcal{Z}^{\frac{1}{2}}}{2^{\frac{1}{2}}} \cdot \big(1+\mathcal{O}(\mathcal{Z}^{-1})\big).
\end{gather*}
We then immediately get from \eqref{eqn:formulae_gsz}:
\begin{equation*}
  \tilde{g}_{0}(z) = \mathcal{Z}, \qquad \tilde{g}_{1}(z) = \frac{\mathcal{Z}^{\frac{3}{2}}}{2^{\frac{1}{2}}} \cdot \big(1+\mathcal{O}(\mathcal{Z}^{-\frac{1}{2}})\big).
\end{equation*}
Furthermore, with these expansions we easily obtain, for $s \ge 2$ arbitrary but fixed,
\begin{align*}
  \tilde{g}_{s}(z) & = \big(1+\mathcal{O}(\mathcal{Z}^{-\frac{1}{2}})\big) \cdot \big(\frac{1}{2} + \mathcal{O}(\mathcal{Z}^{-\frac{1}{2}})\big) \cdot 2 \big(1+\mathcal{O}(\mathcal{Z}^{-1})\big) \cdot \mathcal{Z} \cdot \frac{\mathcal{Z}^{\frac{s}{2}}}{2^{\frac{s}{2}}} \cdot \big(1+\mathcal{O}(\mathcal{Z}^{-1})\big)\\
	& = \frac{\mathcal{Z}^{\frac{s}{2}+1}}{2^{\frac{s}{2}}} \cdot \big(1+\mathcal{O}(\mathcal{Z}^{-\frac{1}{2}})\big),
\end{align*}
which completes the proof.
\end{proof}
Now we have all ingredients at hand to show convergence of the moments of $X_{n}$.
\begin{theorem}\label{the:Xn_Moments}
The $s$-th integer moments $\E\big(\big(\frac{X_{n}}{\sqrt{n}}\big)^{s}\big)$ of the suitably scaled r.v.\ $X_{n}$ converge, for arbitrary but fixed $s \ge 0$ and $n \to \infty$, to the moments of the limit law $H_{1}+H_{2}$:
\begin{equation*}
\E\Big(\big(\frac{X_{n}}{\sqrt{n}}\big)^{s}\Big) \to \tilde{\mu}_{s} = \E\big(\big(H_{1}+H_{2}\big)^{s}\big) = \sum_{k=0}^{s} \binom{s}{k} \cdot \frac{\Gamma\big(\frac{k+1}{2}\big) \Gamma\big(\frac{s-k+1}{2}\big)}{\pi}.
\end{equation*}
\end{theorem}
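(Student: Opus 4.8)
The plan is to read off the asymptotics of the factorial moments $\E(X_{n}^{\underline{s}})$ directly from the representation~\eqref{eqn:representation_moments_Xn} by singularity analysis of the functions $\tilde{g}_{k}(z)$, and then to pass from factorial to ordinary moments and check that the resulting constant equals $\tilde\mu_{s}$. Since the term $s!\,r_{s}/2^{n}$ in~\eqref{eqn:representation_moments_Xn} is exponentially negligible, the whole content sits in $\frac{s!}{2^{n}}\sum_{k=0}^{s}\big([z^{h}]\tilde{g}_{k}(z)\big)\big([z^{n-h}]\tilde{g}_{s-k}(z)\big)$, so everything reduces to good coefficient asymptotics for the $\tilde{g}_{k}(z)$.

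For the extraction I would invoke Lemma~\ref{lem:TildeGzq_Expansion}: near the dominant singularity $\rho=\tfrac12$ one has $\tilde{g}_{k}(z)=\frac{1}{2^{k/2}(1-2z)^{k/2+1}}\big(1+\mathcal{O}(\sqrt{1-2z})\big)$, while by Remark~\ref{rem:Second_Singularity} the singularity at $-\tfrac12$ only produces lower-order contributions (indeed there the factor $\frac{1}{1-2z}$ stays finite, so the exponent drops from $\frac{k}{2}+1$ to $\frac{k}{2}$). The algebraic functions $\tilde{g}_{k}$ are analytic in a suitable $\Delta$-domain, so the standard transfer theorem gives, for each fixed $k$,
\[
[z^{N}]\tilde{g}_{k}(z)\sim \frac{2^{N}\,N^{k/2}}{2^{k/2}\,\Gamma(\tfrac{k}{2}+1)}.
\]
Applying this with $N=h\sim n/2$ and $N=n-h\sim n/2$, using $2^{h}\cdot 2^{n-h}=2^{n}$ to cancel the prefactor $\frac{1}{2^{n}}$, the $k$-th summand contributes asymptotically
\[
\frac{s!\,(n/2)^{s/2}}{2^{s/2}\,\Gamma(\tfrac{k}{2}+1)\Gamma(\tfrac{s-k}{2}+1)}=\frac{s!\,n^{s/2}}{2^{s}\,\Gamma(\tfrac{k}{2}+1)\Gamma(\tfrac{s-k}{2}+1)}.
\]
As $s$ is fixed, summing over the finitely many $k\in\{0,\dots,s\}$ presents no uniformity issue, and after dividing by $n^{s/2}$ the candidate limit is $\frac{s!}{2^{s}}\sum_{k=0}^{s}\frac{1}{\Gamma(\frac{k}{2}+1)\Gamma(\frac{s-k}{2}+1)}$.

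It remains to match this with $\tilde\mu_{s}=\sum_{k=0}^{s}\binom{s}{k}\frac{\Gamma(\frac{k+1}{2})\Gamma(\frac{s-k+1}{2})}{\pi}$, and I expect the only genuinely computational point to be this Gamma identity, which in fact holds term by term via the Legendre duplication formula $\Gamma(z)\Gamma(z+\tfrac12)=2^{1-2z}\sqrt{\pi}\,\Gamma(2z)$. Indeed, taking $z=\frac{a+1}{2}$ yields $\Gamma(\frac{a+1}{2})\Gamma(\frac{a}{2}+1)=2^{-a}\sqrt{\pi}\,a!$, hence $\frac{1}{2^{a}\Gamma(\frac{a}{2}+1)}=\frac{\Gamma(\frac{a+1}{2})}{a!\sqrt{\pi}}$; applied with $a=k$ and $a=s-k$ and multiplied, each summand above becomes exactly $\binom{s}{k}\mu_{k}\mu_{s-k}$, the $k$-th term of~\eqref{eqn:moments_H1+H2}. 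Finally, the passage from $\E(X_{n}^{\underline{s}})$ to $\E(X_{n}^{s})$ is harmless: writing $X_{n}^{s}=\sum_{i=0}^{s}\Stir{s}{i}X_{n}^{\underline{i}}$ and using that the same extraction applied to every $i\le s$ gives $\E(X_{n}^{\underline{i}})\sim c_{i}n^{i/2}$, only the top term $i=s$ survives after dividing by $n^{s/2}$, the rest being $o(1)$. The ``hard part'' is thus not one decisive step but the careful bookkeeping: verifying $\Delta$-analyticity and the two-singularity transfer underlying Remark~\ref{rem:Second_Singularity}, controlling the error terms uniformly in $k$, and establishing the orders $\E(X_{n}^{\underline{i}})=\Theta(n^{i/2})$ that legitimize the factorial-to-ordinary conversion.
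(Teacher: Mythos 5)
Your proposal is correct and follows essentially the same route as the paper's proof: starting from \eqref{eqn:representation_moments_Xn}, discarding the exponentially small $r_{s}$ term, applying singularity analysis/transfer to the local expansions of Lemma~\ref{lem:TildeGzq_Expansion} (with Remark~\ref{rem:Second_Singularity} disposing of the singularity at $-\tfrac12$), and converting factorial to raw moments via Stirling numbers. The only difference is cosmetic — the paper applies the Legendre duplication formula to the coefficient asymptotics $[z^{n}]\tilde{g}_{s}(z)$ before assembling the sum, whereas you assemble first and match with $\tilde{\mu}_{s}$ afterwards — and your Gamma-identity bookkeeping checks out term by term.
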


\begin{proof}[Proof of Theorem~\ref{the:Xn_Moments}]
We consider the representation~\eqref{eqn:representation_moments_Xn} of the $s$-th factorial moments of $X_{n}$ and first observe that $r_{s}$, the contributions of $R(q)$, are bounded, $|r_{s}| \le 24$, thus turn out to be exponentially small compared to the remaining contributions. Consequently, they can be neglected, yielding
\begin{equation}\label{eqn:facmom_Xn_reduced}
  \mathbb{E}(X_{n}^{\underline{s}}) \sim \sum_{k=0}^{s} \frac{s!}{2^{n}} \sum_{k=0}^{s} \big([z^{h}] \tilde{g}_{k}(z)\big) \cdot \big([z^{n-h}] \tilde{g}_{s-k}(z)\big).
\end{equation}
In order to extract coefficients from the functions $\tilde{g}_{s}(z)$ asymptotically, we use Lemma~\ref{lem:TildeGzq_Expansion} and apply transfer lemmata \cite{FlaSed} that ``translate'' the local behaviour of the generating function near the dominant singularity to the asymptotic behaviour of their coefficients. The local expansion around $\rho=\frac{1}{2}$ given there (see also Remark~\ref{rem:Second_Singularity}) immediately leads to the following asymptotic behaviour, for $n \to \infty$ and arbitrary but fixed $s \ge 0$:
\begin{equation*}
  [z^{n}] \tilde{g}_{s}(z) = \frac{2^{n} n^{\frac{s}{2}}}{2^{\frac{s}{2}} \Gamma(\frac{s}{2}+1)} \cdot \big(1+\mathcal{O}(n^{-\frac{1}{2}})\big) = \frac{2^{n} 2^{\frac{s}{2}} \Gamma(\frac{s+1}{2})}{s! \sqrt{\pi}} \cdot \big(1+\mathcal{O}(n^{-\frac{1}{2}})\big),
\end{equation*}
where we used for the latter equation the duplication formula of the Gamma function, $\Gamma(\frac{s}{2}+1) \Gamma(\frac{s+3}{2}) = \sqrt{\pi} \, 2^{-s-1} \Gamma(s+2)$, and $\Gamma(\frac{1}{2})=\sqrt{\pi}$.

Plugging this asymptotic result into \eqref{eqn:facmom_Xn_reduced} and using $h=\frac{n}{2} \cdot \big(1+\mathcal{O}(n^{-1})\big)$, we get
\begin{equation}
\begin{split}
\label{eqn:final}
  \mathbb{E}(X_{n}^{\underline{s}}) & = \sum_{k=0}^{s} \frac{s!}{2^{n}} \cdot \frac{2^{h} 2^{\frac{k}{2}} \Gamma(\frac{k+1}{2}) h^{\frac{k}{2}}}{\sqrt{\pi} \, k!} \cdot \big(1+\mathcal{O}(h^{-\frac{1}{2}})\big)\\
	& \qquad \cdot \frac{2^{n-h} 2^{\frac{s-k}{2}} \Gamma(\frac{s-k+1}{2}) (n-h)^{\frac{s-k}{2}}}{\sqrt{\pi} \, (s-k)!} \cdot \big(1+\mathcal{O}((n-h)^{-\frac{1}{2}})\big)\\
	& = \sum_{k=0}^{s} \binom{s}{k} \cdot \frac{\Gamma(\frac{k+1}{2}) \Gamma(\frac{s-k+1}{2})}{\pi} \cdot n^{\frac{s}{2}} \cdot \big(1+\mathcal{O}(n^{-\frac{1}{2}})\big).
\end{split}
\end{equation}
The raw moments can be expressed in terms of the factorial moments by
\[
\E(X_{n}^{s}) = \sum_{k=0}^{s}\Stir{s}k\E(X_{n}^{\underline{k}}),
\]
where $\Stir{s}{k}$ denote the Stirling numbers of the second kind, counting the number
of ways to partition a set of $s$ objects into $k$ non-empty subsets.
From~\eqref{eqn:final} we thus obtain 
\[
\E(X_{n}^{s}) = \E(X_{n}^{\underline{s}}) + \mathcal{O}\big(\E(X_{n}^{\underline{s-1}})\big)
\]
for $n \to \infty$ and arbitrary but fixed $s \ge 1$. 
This leads to the expansion
\begin{equation*}
  \mathbb{E}(X_{n}^{s}) = \sum_{k=0}^{s} \binom{s}{k} \cdot \frac{\Gamma(\frac{k+1}{2}) \Gamma(\frac{s-k+1}{2})}{\pi} \cdot n^{\frac{s}{2}} \cdot \big(1+\mathcal{O}(n^{-\frac{1}{2}})\big) = \tilde{\mu}_{s} \cdot n^{\frac{s}{2}} \cdot \big(1+\mathcal{O}(n^{-\frac{1}{2}})\big).
\end{equation*}
Scaling of $X_n$ by $\sqrt{n}$ immediately yields the stated result.
\end{proof}

\begin{remark}[Proof of Theorem~\ref{the:limitLaw} by the method of moments]
Finally, we note that Theorem~\ref{the:Xn_Moments} also strengthens Theorem~\ref{the:limitLaw}. 
Carleman's criterion~\cite[pp.~189--220]{Carleman23} for the Stieltjes moment problem, support $[0,\infty)$, states
that if 
\begin{equation}
\sum_{s=0}^{\infty}\mu_s^{-1/(2s)}=+\infty,
\label{eq:Carleman}
\end{equation}
then the moment sequence $(\mu_s)_{s\ge 1}$ determines a unique distribution. Furthermore, this implies 
that if there exists a constant $C>0$ such that
\[
\mu_s\le C^s(2s)!\quad\text{for } s\in\N,
\]
then Carleman's criterion is satisfied. We note that for the sum of independent half-normals $H_1+H_2$ with
moment sequence $(\tilde{\mu}_{s})_{s\in\N}$ it holds
\begin{equation*}
\begin{split}
\tilde{\mu}_{s} 
&= \sum_{k=0}^{s}\binom{s}k \frac{\Gamma(\frac{k+1}2)\Gamma(\frac{s-k+1}2)}{\Gamma^2(\frac{1}2)}
\le \frac1\pi\sum_{k=0}^{s}\binom{s}k \Gamma(k+1)\Gamma(s-k+1)
=\frac{s}\pi s! \le (2s)!,
\end{split}
\end{equation*}
such that the divergence in Carleman's criterion is satisfied. Thus, by the Fr\'echet--Shohat theorem~\cite{FrSh1931}, 
we obtain the weak convergence of the normalized random variable $\frac{X_n}{\sqrt{n}}$ to $H_1+H_2$ with moment sequence $(\tilde{\mu}_s)_{s\in\N}$. 
\end{remark}

\section{Conclusion}
We studied the number of correct guesses when starting with an ordered deck of
$n$ cards labeled $1$ up to $n$ is riffle-shuffled exactly one time. Assuming that no feedback is given to the person guessing, 
the limit law was determined. Additionally, we have shown convergence of all positive integer moments, providing
a second proof of the limit law. We note that the approach of this work also allows to analyze
different questions, like waiting times for correct guesses, etc.

\section*{Declarations of interest}
The authors declare that they have no competing financial or personal interests that influenced the work reported in this paper. 

\bibliographystyle{cyrbiburl}
\bibliography{CardGuessingNoOne-refs}{}

%\section*{General case: n different colors}
%\subsection*{Decomposition}
%\subsection{Hitting times}

\end{document}